\documentclass{rmmcart}

\usepackage{amsmath,amsfonts,array,booktabs}

\title{An Automorphic Classification of Real Cubic Curves}

\author{Mark Bly}
\address{Department of Mathematics and Statistics, Coastal Carolina University, Conway, SC 29528}
\email{mbly@coastal.edu}
\thanks{Special thanks to S.B. Mulay for his patient guidance.}


\keywords{real cubic curves, automorphisms, classification}
\subjclass{14N99, 14H10}

\begin{document}
\begin{abstract}
The action of ring automorphisms of $\mathbb{R}[x,y]$ on real plane curves is considered. The orbits containing degree-three polynomials are computed, with one representative per orbit being selected.
\end{abstract}
\maketitle

Cubic curve classification has a deep history, dating back to Newton \cite{New}, Pl{\"u}cker \cite{Plu}, and Cayley \cite{Cay}. Many have addressed this problem by studying the action of ring automorphisms of $\mathbb{R}[x,y]$ that are linear in $x$ and $y$ on degree-three polynomials of $\mathbb{R}[x,y]\,$ \cite{Bur,For,Kor,Wein}. One such example was published in this journal \cite{Wein}.

This article serves two primary purposes. First, we wish to complete the classification that was started in \cite{Wein}. Second, we wish to extend this result to a classification that considers the action of all ring automorphisms of $\mathbb{R}[x,y]\,$.

Throughout, we will refer to the ring automorphisms of real-valued polynomials in two variables by $\text{Aut} \, \mathbb{R}[x,y]\,$. When explicitly defining a particular $\varphi \in \text{Aut} \, \mathbb{R}[x,y]\,$, we will frequently express $\varphi$ in the form $\big\langle \, p \, , \, q \, \big\rangle\,$, where $p=\varphi(x)$ and $q=\varphi(y)\,$. The group of automorphisms for which $p$ and $q$ are both linear (ie. the affine group) we will refer to by $\Gamma_2(\mathbb{R})\,$. We will also use $\sigma$ to refer to the signum function that maps zero to one. 

\section{Affine Classification}

We will now begin our affine classification by defining the relevant equivalence relation on $\mathbb{R}[x,y]\,$.

\begin{definition}
If $f,g$ are polynomials in $\mathbb{R}[x,y]\,$, then we say $f \sim g$ if there exists some $\theta \in \Gamma_2(\mathbb{R})$ and some $c \in \mathbb{R}^{\times}$ such that $\theta(f) \, = \, cg\,$.
\end{definition}

With respect to our equivalence relation $\sim\,$, we seek a list of polynomials comprised of exactly one representative from each equivalence class that contains a degree-three polynomial. As in \cite{Wein}, we will assume that the homogeneous degree-three component of our polynomials be in one of four canonical forms: $x^3+xy^2\,$, $x^3-xy^2\,$, $x^2y\,$, $x^3$.

\begin{proposition}\label{prop_x^3+xy^2}
If $f$ is a polynomial in $\mathbb{R}[x,y]$ of the form \[ x^3+xy^2+Ex^2+Fxy+Gy^2+\lambda(f) \] where $E,F,G$ are real numbers and $\lambda(f)$ is linear, then $f \sim g$ for some $g$ listed in Table \ref{affine}.
\end{proposition}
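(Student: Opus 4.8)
The plan is to first pin down exactly which elements of $\Gamma_2(\mathbb{R})$ (up to the scalar allowed by $\sim$) can carry $f$ to another polynomial whose leading cubic is again $x^3+xy^2$, and then to exhaust the freedom they provide on the lower-order coefficients. Writing $\lambda(f)=Hx+Iy+J$ and an arbitrary $\theta=\langle\, a_{11}x+a_{12}y+s\,,\ a_{21}x+a_{22}y+t\,\rangle$, the degree-three part of $\theta(f)$ is obtained by substituting the linear parts into $x(x^2+y^2)$. Since $\mathbb{R}[x,y]$ is a UFD and $x^2+y^2$ is anisotropic (hence irreducible over $\mathbb{R}$), the factorization $x\cdot(x^2+y^2)$ is unique up to scalars; requiring the image to be a scalar multiple of $x^3+xy^2$ forces the linear part of $\theta$ to be diagonal, $x\mapsto\alpha x$, $y\mapsto\varepsilon\alpha y$ with $\alpha\in\mathbb{R}^{\times}$ and $\varepsilon\in\{\pm1\}$, and fixes the scalar as $c=\alpha^3$. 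Thus the transformations available to me are exactly the compositions of a translation $\langle\, x+s\,,\ y+t\,\rangle$ with such a diagonal scaling-reflection.

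Next I would spend the two translation parameters on the quadratic coefficients. A direct expansion shows a translation sends $(E,F,G)$ to $(E+3s,\ F+2t,\ G+s)$, so choosing $t=-F/2$ kills the $xy$ term while $s$ slides $(E,G)$ along the line of shifts determined by the cubic's second-order data. The combination $E-3G$ is annihilated by every such shift, so it is the one genuine translation invariant of the quadratic part; everything else in $(E,F,G)$ can be removed. This produces the main dichotomy: when $E\neq 3G$ I would set $s=-G$ to reach a quadratic part $(E-3G)x^2$, whereas when $E=3G$ the same $s$ annihilates the entire quadratic part simultaneously. In either branch the degree $\le 1$ coefficients are now determined explicit functions of the original data (for instance the new $y$-coefficient becomes $I-FG$), and no translation freedom remains.

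Finally I would use the diagonal scaling to normalize magnitudes: the substitution multiplies the surviving quadratic coefficient by $1/\alpha$, the linear coefficients by $1/\alpha^2$, and the constant by $1/\alpha^3$, while $\varepsilon$ toggles the sign of the terms odd in $y$. In the branch $E\neq 3G$, choosing $\alpha=E-3G$ scales the $x^2$-coefficient to $1$ and leaves the remaining coefficients as the parameters recorded in Table \ref{affine}; in the branch $E=3G$ one instead spends $\alpha$ on the leading nonzero lower-order coefficient, with $\sigma$ tracking the residual signs that the scaling cannot alter. The bulk of the work, and the part I expect to be the real obstacle, is the case analysis in this degenerate branch: once the quadratic part is gone, the reduction of $Hx+Iy+J$ depends on which of these coefficients vanish, the sign of the leading survivor is a genuine invariant (since $\alpha$ enters those terms only through even powers), and each sub-case must be matched to exactly one row of the table. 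Verifying that this procedure exhausts every such $f$ and that no case is omitted is where the bookkeeping becomes delicate.
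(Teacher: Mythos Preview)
Your approach is essentially the paper's: translate to kill $F$ and reduce the quadratic part to the single invariant $E-3G$, then rescale diagonally (the paper does both at once via the explicit maps in its Table~\ref{x^3+xy^2_high}, e.g.\ $\langle (E-3G)x-G,\ (E-3G)y-F/2\rangle$), and finally normalize the lower-order terms by the case tables. Your preliminary determination of the stabilizer of $x(x^2+y^2)$ is correct but is not needed for this existence statement---it is exactly what the paper invokes later, in Proposition~\ref{prop2_x^3+xy^2}, to prove pairwise inequivalence.

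One small correction in your degenerate-branch reasoning: the claim that ``the sign of the leading survivor is a genuine invariant (since $\alpha$ enters those terms only through even powers)'' is not uniformly true. It holds for the $x$-coefficient (hence the $\pm x$ in the table), but the $y$-coefficient is odd in $y$ and so its sign is flipped by your $\varepsilon$; that is why the table has only $+y$, not $\pm y$, and why the paper's $\theta'$ in that case is $\langle\sigma(J)|I|^{1/2}x,\ \sigma(J)\sigma(I)|I|^{1/2}y\rangle$. Likewise the constant term's sign is killed by the sign of $\alpha$. With that adjustment your outline matches the paper's Tables~\ref{x^3+xy^2_high}--\ref{x^3+xy^2_low} exactly.
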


\begin{proof}
Let $\theta$ be as defined in Table \ref{x^3+xy^2_high} and consider $\theta(f)\,$. It follows that $\theta(f)$ is in one of the following forms, where $c,H,I,J$ are in $\mathbb{R}\,$.
\begin{align}
c\left(x^3+xy^2+x^2+Hx+Iy+J\right) \, , \\
c\left(x^3+xy^2+Hx+Iy+J\right) \, . 
\end{align}
Should $\theta(f)$ be of form (1), let $\theta'$ be $\big\langle \, x \, , \, \sigma(I)y \, \big\rangle\,$. Should $\theta(f)$ be of form (2), let $\theta'$ be as defined in Table \ref{x^3+xy^2_low}. Consider $\left(\theta'\circ\theta \right)(f)$ and the result follows.
\end{proof}

\begin{table}
\def\arraystretch{1.5}
\begin{tabular}{| >{\centering}m{1.15in} | >{\centering}m{2.75in} |}
\hline
$E \neq 3G$ & $\big\langle \, (E-3G)x-G \, , \, (E-3G)y-\frac{F}{2} \, \big\rangle$
\tabularnewline \hline
$E=3G$ & $\big\langle \, x-G \, , \, y-\frac{F}{2} \, \big\rangle$
\tabularnewline \hline
\end{tabular}
\newline
\caption{For polynomials $x^3+xy^2+Ex^2+Fxy+Gy^2 + \lambda(f)$}\label{x^3+xy^2_high}
\end{table}

\begin{table}
\def\arraystretch{1.5}
\begin{tabular}{| >{\centering}m{1.15in} | >{\centering}m{2.75in} |}
\hline
$I \neq 0$ & $\big\langle \, \sigma(J)|I|^{1/2}x \, , \, \sigma(J)\sigma(I)|I|^{1/2}y \, \big\rangle$
\tabularnewline \hline
$I=0\,$, $H \neq 0$ & $\big\langle \, \sigma(J)|H|^{1/2}x \, , \, \sigma(J)|H|^{1/2}y \, \big\rangle$
\tabularnewline \hline
$I=H=0\,$, $J\neq0$ & $\big\langle \, J^{1/3}x \, , \, J^{1/3}y \, \big\rangle$
\tabularnewline \hline
$I=H=J=0$ & $\big\langle \, x \, , \, y \, \big\rangle$
\tabularnewline \hline
\end{tabular}
\newline
\caption{For polynomials $c(x^3+xy^2+Hx+Iy+J)$}\label{x^3+xy^2_low}
\end{table}

\begin{proposition}\label{prop_x^3-xy^2}
If $f$ is a polynomial in $\mathbb{R}[x,y]$ of the form \[ x^3-xy^2+Ex^2+Fxy+Gy^2+\lambda(f) \] where $E,F,G$ are real numbers and $\lambda(f)$ is linear, then $f \sim g$ for some $g$ listed in Table \ref{affine}.
\end{proposition}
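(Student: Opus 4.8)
The plan is to mirror the two-stage reduction used in the proof of Proposition~\ref{prop_x^3+xy^2}, since the only structural change is the sign of the $xy^2$ term in the homogeneous cubic part. First I would introduce a linear automorphism $\theta = \big\langle \, \alpha x + p \, , \, \alpha y + q \, \big\rangle$ with a common scaling $\alpha$ on both variables, so that $x^3 - xy^2 \mapsto \alpha^3(x^3 - xy^2)$, and record how it acts on the quadratic part. A direct expansion shows the transformed coefficients of $xy$ and $y^2$ are $\alpha^2(F - 2q)$ and $\alpha^2(G - p)$, so the choices $q = F/2$ and $p = G$ annihilate both, after which the coefficient of $x^2$ equals $\alpha^2(E + 3G)$. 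This is exactly the $+$ computation with $3G$ replaced by $-3G$, so the governing dichotomy is $E + 3G \neq 0$ versus $E + 3G = 0$ in place of $E \neq 3G$ versus $E = 3G$.

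Carrying this out, when $E + 3G \neq 0$ I would take $\alpha = E + 3G$, so that the surviving $x^2$-coefficient equals $\alpha^3$ and, after factoring out $c = \alpha^3$, normalizes to $1$; when $E + 3G = 0$ the $x^2$-coefficient already vanishes and I may take $\alpha = 1$. This produces an analogue of the high-degree table (Table~\ref{x^3+xy^2_high}) and reduces $\theta(f)$ to one of the two forms $c(x^3 - xy^2 + x^2 + Hx + Iy + J)$ or $c(x^3 - xy^2 + Hx + Iy + J)$, where $c,H,I,J \in \mathbb{R}$.

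For the second stage I would post-compose with $\theta'$ exactly as before. In the first form the only symmetry compatible with the $x^2$ term is $y \mapsto -y$ (which fixes both $x^2$ and $-xy^2$, since $-x(-y)^2 = -xy^2$, while no nontrivial shift preserves the shape), so $\theta' = \big\langle \, x \, , \, \sigma(I)y \, \big\rangle$ renders the $y$-coefficient nonnegative just as in Proposition~\ref{prop_x^3+xy^2}. In the second form I would use scalings $\big\langle \, \beta x \, , \, \gamma y \, \big\rangle$ with $\gamma^2 = \beta^2$ (the condition that $x^3 - xy^2$ be preserved up to the factor $\beta^3$), assembling a low-degree table parallel to Table~\ref{x^3+xy^2_low} that normalizes whichever of $I$, $H$, $J$ is the first nonzero in that order, with the appropriate $\sigma$-factors fixing signs. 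Composing $\theta' \circ \theta$ then lands $f$ on an entry of Table~\ref{affine}.

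The routine obstacle is simply the sign bookkeeping in the first stage and the enumeration of the nonzero-coefficient cases in the second. The genuinely new feature, and the point I would check most carefully, is that $x^3 - xy^2 = x(x-y)(x+y)$ splits into three distinct real lines, so its group of linear symmetries up to scalar is the full $S_3$ permuting those lines rather than the single reflection available in the $+$ case; for instance $\big\langle \, x+y \, , \, y - 3x \, \big\rangle$ sends $x^3 - xy^2$ to $-8(x^3 - xy^2)$. The diagonal automorphisms above realize only the transposition $y \mapsto -y$, so I would verify that the representative reached by the parallel construction is precisely the one recorded in Table~\ref{affine}, and where the table's entries have been consolidated using a mixing symmetry I would apply the relevant element of $S_3$, such as the map just exhibited, to bridge the remaining gap.
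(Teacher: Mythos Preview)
Your first-stage reduction is sound computation, but it already diverges from the paper in a telling way: you kill the $xy$- and $y^2$-terms and keep $x^2$, whereas the paper kills $xy$ and $x^2$ (via the translation $p=-E/3$) and keeps $-y^2$. Since Table~\ref{affine} records the $x^3-xy^2$ block with a $-y^2$ quadratic term, not an $x^2$ term, your intermediate form $x^3-xy^2+x^2+Hx+Iy+J$ is simply not in the table, and the sentence ``$\theta'=\langle x,\sigma(I)y\rangle$ renders the $y$-coefficient nonnegative just as in Proposition~\ref{prop_x^3+xy^2}'' does not finish the job.

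The deeper gap is in the form without quadratic term. Your plan is to use diagonal scalings $\langle\beta x,\gamma y\rangle$ with $\gamma^2=\beta^2$ to normalize the first nonzero among $I,H,J$. But look at what Table~\ref{affine} actually demands: the representative is $x^3-xy^2-y+Hx+J$ with $H\in[-1,1]$, and there is \emph{no} entry of the shape $x^3-xy^2\pm x+J$ in this block. A diagonal scaling sends the pair $(H,I)$ to $(H/\beta^2,\pm I/\beta^2)$, so the ratio $|H|/|I|$ is invariant; if you start with $|H|>|I|$ you can never scale into $|H|\le 1$ after normalizing $|I|=1$, and if $I=0$ with $H\ne 0$ you cannot create a $y$-term at all. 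This is exactly where the $S_3$ symmetry is not a final cosmetic ``bridge'' but the heart of the argument. The paper applies a mixing map of the form $\langle\alpha(x+\sigma(H)y),\,-\alpha\sigma(-H)(3x-\sigma(H)y)\rangle$ and then proves, via an explicit inequality analysis, that the new $x$-coefficient satisfies $\bigl|\tfrac{|H|-3|I|}{|H|+|I|}\bigr|<1$; a separate mixing map handles $I=0$. Likewise for the $-y^2$ form the constraint $H+I\le -\tfrac34$ forces mixing maps in two of three subcases, each followed by a short chain of inequalities. Your proposal defers all of this to ``verify \ldots and where the table's entries have been consolidated \ldots apply the relevant element of $S_3$,'' but that verification would fail as stated, and supplying it is precisely the work the paper does.
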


\begin{proof}
Let $\theta$ be as defined in Table \ref{x^3-xy^2_high} and consider $\theta(f)\,$. It follows that $\theta(f)$ is in one of the following forms, where $c,H,I,J$ are in $\mathbb{R}\,$.
\begin{align}
c\left(x^3-xy^2+Hx+Iy+J\right) \, , \\
c\left(x^3-xy^2-y^2+Hx+Iy+J \right) \, .
\end{align}
We will consider four example cases, two where $\theta(f)$ is of form (3) and two where $\theta(f)$ is of form (4)$\,$.

Assume that $\theta(f)$ is of form (3) and $I \neq 0\, ,\, |H|>|I|\,$. (We will neglect $c\,$, which can be factored out.) Define $\theta_1$ to be $\big\langle \, |I|^{1/2} x \, , \, \sigma(-I)|I|^{1/2}y \, \big\rangle\,$, and note that\[ \left(\theta_1 \circ \theta\right)(f) \, = \, |I|^{3/2} \, \left( \, x^3-xy^2+\frac{H}{|I|}x-y+\frac{J}{|I|^{3/2}} \, \right) \, . \]Subsequently let $\alpha:=-\sqrt{\frac{|H|+|I|}{8\:\!|I|}}\,$, and define $\theta_2$ to be \[ \Big\langle \: \alpha\left( \, x+ \sigma(H)\:\! y\, \right) \: , \: -\alpha\:\! \sigma(-H) \left( \, 3x-\sigma(H) \:\! y \,\right) \: \Big\rangle \, . \]It follows that $\left(\theta_2 \circ \theta_1 \circ \theta\right)(f)$ equals \[ -8\alpha^3 \left( \, x^3-xy^2+ \left(\sigma(-H)\frac{|H|-3|I|}{|H|+|I|}\right)x - y + \frac{J}{-8\alpha^3 |I|^{3/2}} \, \right) \, . \] It remains to consider the absolute value of $\frac{|H|-3|I|}{|H|+|I|}\,$. If $|H|-3|I|$ is positive, then $|H|-3|I|$ must have a smaller absolute value than $|H|+|I|\,$. Should $|H|-3|I|$ be negative, our assumption $|H|>|I|$ implies that \[ \frac{|H|-3|I|}{|H|+|I|} \, > \, \frac{-2\:\!|I|}{2\:\!|I|} \, = \, -1 \, . \] Regardless of case, we have that $x$-coefficient of $\frac{\left(\theta_2 \circ \theta_1 \circ \theta\right)(f)}{-8\alpha^3}$ must be less than $1$ in absolute value. Letting $\theta_3$ be defined as $\big\langle \, \sigma(J) x \, , \, \sigma(J) y \, \big\rangle\,$, note that the appropriate $\theta'$ in Table \ref{x^3-xy^2_low1} is exactly $\theta_3 \circ \theta_2 \circ \theta_1\,$, and the desired result follows from considering $\left(\theta' \circ \theta\right)(f)\,$.

Assume that $\theta(f)$ is of form (3) and $I=0\, ,\, H \neq 0\,$. (We will neglect $c\,$, which can be factored out.) Let $\beta$ be $-\sqrt{\frac{|H|}{8}}\,$, and define $\theta_1$ to be\[ \big\langle \, {-\beta\left( \, x+\sigma(H)\:\! y \, \right)} \: , \: {-\beta\left( \, 3x-\sigma(H)\:\! y \, \right)} \, \big\rangle \, . \]It follows that $\left(\theta_1 \circ \theta\right)(f)$ is equal to \[ 8\beta^3 \left( \, x^3-xy^2+\frac{H}{-8\beta^2}x+ \frac{|H|}{-8\beta^2}y+ \frac{J}{8\beta^3} \, \right) \, . \]Note that the $x$- and $y$-coefficient of $\frac{\left(\theta_1 \circ \theta\right)(f)}{8\beta^3}$ must be $\pm 1$ and ${-1}$, respectively. Letting $\theta_2$ be defined as $\big\langle \, \sigma(-J) x \, , \, \sigma(-J) y \, \big\rangle\,$, note that the appropriate $\theta'$ in Table \ref{x^3-xy^2_low1} is exactly $\theta_2 \circ \theta_1\,$, and the desired result follows from considering $\left(\theta' \circ \theta\right)(f)\,$.

Assume that $\theta(f)$ is of form $(4)$ and $\left|H+\frac{3}{4}\right| < |I|\,$. (We will neglect $c\,$, which can be factored out.) Define $\theta_1$ to be \[ \left\langle \, {-\frac{1}{2}}(x+y)-\frac{3}{4} \: , \: \sigma(I) \left({-\frac{1}{2}}(3x-y)-\frac{3}{4}\right) \, \right\rangle\, . \] It follows that $(\theta_1 \circ \theta)(f)$ is equal to $x^3-xy^2-y^2+H'x+I'y+J'\,$, where \[ H' \, = \, {-\frac{9}{8}}-\frac{H}{2}-\frac{3|I|}{2} \, ; \qquad I' \, = \, {-\frac{3}{8}}-\frac{H}{2}+\frac{|I|}{2}\, . \] First, observe that our assumed relation on $H$ and $I$ implies that $H \leq {-\frac{3}{4}}+|I|\,$, which subsequently yields \[ I' \, \geq \, {-\frac{3}{8}}-\left({-\frac{3}{8}}+\frac{|I|}{2}\right)+\frac{|I|}{2} \, = \, 0 \, . \] Second, observe that our assumed relation on $H$ and $I$ implies that $H+|I| \geq {-\frac{3}{4}}\,$, which subsequently yields \[ H'+I' \, = \, {-\frac{3}{2}}-(H+|I|) \, \leq \, {-\frac{3}{4}}\, . \] As such, $(\theta_1 \circ \theta)(f)$ is of the form of a polynomial from Table \ref{affine}$\,$.

Assume that $\theta(f)$ is of the form of $(4)$ and $H+\frac{3}{4}\geq|I|\,$. (We will neglect $c\,$, which can be factored out.) Define $\theta_1$ to be \[ \left\langle \, {-\frac{1}{2}}(x-y)-\frac{3}{4} \: , \: \sigma(I) \left({-\frac{1}{2}}(3x+y)-\frac{3}{4}\right) \, \right\rangle\, . \] It follows that $(\theta_1 \circ \theta)(f)$ is equal to $x^3-xy^2-y^2+H'x+I'y+J'\,$, where \[ H' \, = \, {-\frac{9}{8}}-\frac{H}{2}-\frac{3|I|}{2} \, ; \qquad I' \, = \, \frac{3}{8}+\frac{H}{2}-\frac{|I|}{2}\, . \] First, observe that our assumed relation on $H$ and $I$ implies that $H-|I| \geq {-\frac{3}{4}}\,$, which subsequently yields \[ I' \, \geq \, {\frac{3}{8}}+\left({-\frac{3}{8}}\right) \, = \, 0 \, . \] Second, observe that \[ H'+I' \, = \, {-\frac{3}{4}}-2|I| \, \leq \, {-\frac{3}{4}}\, . \] As such, $(\theta_1 \circ \theta)(f)$ is of the form of a polynomial from Table \ref{affine}$\,$.

In the remaining cases where $\theta(f)$ is of form (3), let $\theta'$ be as defined in Table \ref{x^3-xy^2_low1}. In the remaining cases where $\theta(f)$ is of form (4), let $\theta'$ be as defined in Table \ref{x^3-xy^2_low2}. Consider $\left(\theta'\circ\theta \right)(f)$ and the result follows.
\end{proof}

\begin{table}
\def\arraystretch{1.5}
\begin{tabular}{| >{\centering}m{1.15in} | >{\centering}m{2.75in} |}
\hline
$E \neq -3G$ & $\big\langle \, (-\frac{E}{3}-G)x-\frac{E}{3} \, , \, (-\frac{E}{3}-G)y+\frac{F}{2} \, \big\rangle$
\tabularnewline \hline
$E=-3G$ & $\big\langle \, x-\frac{E}{3} \, , \, y+\frac{F}{2} \, \big\rangle$
\tabularnewline \hline
\end{tabular}
\newline
\caption{For polynomials $x^3-xy^2+Ex^2+Fxy+Gy^2 + \lambda(f)$}\label{x^3-xy^2_high}
\end{table}

\begin{table}
\def\arraystretch{1.5}
\begin{tabular}{| >{\centering}m{1.15in} | >{\centering}m{2.75in} |}
\hline
$I \neq 0\,$, $|H|\leq|I|$ & $\big\langle \, \sigma(J)|I|^{1/2}x \, , \, \sigma(J)\sigma(-I)|I|^{1/2}y \, \big\rangle$
\tabularnewline \hline
$I \neq 0\,$, $|H|>|I|$ & $\big\langle \, \alpha |I|^{1/2}\sigma(J)\left(x+\sigma(H)y\right) \, , \qquad\qquad\qquad\qquad\!$
\tabularnewline
 &  $\qquad\qquad \, -\alpha |I|^{1/2} \sigma(J) \sigma(IH) \left(3x-\sigma(H)y\right) \, \big\rangle$
\tabularnewline \hline
$I=0\,$, $H\neq0$ & $\big\langle \, \beta \, \sigma(J) \left(x+\sigma(H)y\right) \, , \, \beta\, \sigma(J) \left(3x-\sigma(H)y\right) \, \big\rangle$
\tabularnewline \hline
$I=H=0\,$, $J\neq0$ & $\big\langle \, J^{1/3}x \, , \, J^{1/3}y \, \big\rangle$
\tabularnewline \hline
$I=H=J=0$ & $\big\langle \, x \, , \, y \, \big\rangle$
\tabularnewline \hline
\end{tabular}
\newline
\caption{For polynomials $c(x^3-xy^2+Hx+Iy+J)$}\label{x^3-xy^2_low1}
\end{table}

\begin{table}
\def\arraystretch{1.5}
\begin{tabular}{| >{\centering}m{1.15in} | >{\centering}m{2.75in} |}
\hline
$\left|H+\frac{3}{4}\right| < |I|$ & $\big\langle \, -\frac{1}{2}(x+y)-\frac{3}{4} \, , \, \sigma(I)\left(-\frac{1}{2}(3x-y)-\frac{3}{4}\right) \, \big\rangle$
\tabularnewline \hline
$H+ \frac{3}{4} \geq |I|$ & $\big\langle \, -\frac{1}{2}(x-y)-\frac{3}{4} \, , \, \sigma(I)\left(-\frac{1}{2}(3x+y)-\frac{3}{4}\right) \, \big\rangle$
\tabularnewline \hline
$H+ \frac{3}{4} \leq -|I|$ & $\big\langle \, x \, , \, \sigma(I)y \, \big\rangle$
\tabularnewline \hline
\end{tabular}
\newline
\caption{For polynomials $c(x^3-xy^2-y^2+Hx+Iy+J)$}\label{x^3-xy^2_low2}
\end{table}

\begin{proposition}\label{prop_x^2y}
If $f$ is a polynomial in $\mathbb{R}[x,y]$ of the form \[ x^2y+Ex^2+Fxy+Gy^2+\lambda(f) \] where $E,F,G$ are real numbers and $\lambda(f)$ is linear, then $f \sim g$ for some $g$ listed in Table \ref{affine}.
\end{proposition}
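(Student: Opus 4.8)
The plan is to follow the two-stage reduction of Propositions \ref{prop_x^3+xy^2} and \ref{prop_x^3-xy^2}, now with leading form $x^2y$. The first task is to identify the admissible transformations. Writing a general $\varphi \in \Gamma_2(\mathbb{R})$ with linear parts $ax + by$ for $x$ and $cx + dy$ for $y$, the degree-three component of $\varphi(f)$ is $(ax+by)^2(cx+dy)$; for this to be a scalar multiple of $x^2y$ the repeated factor must go to $x^2$ and the simple factor to $y$, forcing $b = c = 0$. Unlike the three-real-line form $x^3 - xy^2$, whose factors may be permuted, the factors of $x^2y$ have different multiplicities and cannot be interchanged, so the stabilizer is exactly the diagonal family $\theta = \langle ax + e, dy + h\rangle$ with $a, d \neq 0$ (the scalar $c$ being forced to $a^2d$ to keep the leading coefficient $1$). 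Every reduction must take place within this family.

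For the first stage I would use the two translation parameters to clear quadratic terms: the substitution sends the $x^2$-coefficient to a nonzero multiple of $h + E$ and the $xy$-coefficient to a nonzero multiple of $2e + F$, so $h = -E$ and $e = -\tfrac{F}{2}$ eliminate both. The $y^2$-coefficient, however, becomes $Gd/a^2$, which no admissible map can kill when $G \neq 0$ and which no shift can revive when $G = 0$ (shifting $y$ restores $x^2$, shifting $x$ restores $xy$). This is the analogue of Table \ref{x^3+xy^2_high}: after the first stage $f$ is equivalent to $c(x^2y + G'y^2 + Hx + Iy + J)$, and the case split is on $G = 0$ versus $G \neq 0$ rather than on $E = 3G$.

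The second stage uses the surviving scaling freedom in $a$ and $d$, together with the reflections $\langle -x, y\rangle$ and $\langle x, -y\rangle$ (the latter absorbing a scalar $c = -1$ to restore the leading coefficient), to normalize the remaining coefficients $H, I, J$ to the representatives of Table \ref{affine}, exactly as the low tables of the earlier propositions do. When $G = 0$ both scalings are free and two of the three coefficients can be normalized; when $G \neq 0$ one relation between $a$ and $d$ is consumed in normalizing the $y^2$-coefficient, leaving a single scaling with which to normalize one further coefficient.

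The crux is not any single computation—since only diagonal maps are available, no clever mixing transformation of the kind used in Proposition \ref{prop_x^3-xy^2} is needed—but the bookkeeping required to select exactly one representative per orbit. A parameter count (a $4$-dimensional group acting on a $6$-dimensional family of such $f$) shows that residual moduli necessarily persist, so I must pin down the genuine invariants and their ranges. The subtle point is that the available sign-reflections differ between the two cases: pinning the $y^2$-coefficient when $G \neq 0$ forces the sign of $d$, which removes the $y$-reflection and thereby makes the sign of the constant term an additional invariant. Getting these sign and vanishing distinctions exactly right—so that no orbit is listed twice and none is omitted—is where the care must go.
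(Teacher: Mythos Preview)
Your proposal is correct and follows essentially the same two-stage reduction as the paper: first a translation $\langle x-\tfrac{F}{2},\,y-E\rangle$ (with a rescaling of $y$ by $1/G$ when $G\neq 0$) to kill the $x^2$- and $xy$-terms and normalize the $y^2$-coefficient, yielding the two families $c(x^2y+y^2+Hx+Iy+J)$ and $c(x^2y+Hx+Iy+J)$; then diagonal scalings $\langle ax,\,dy\rangle$ chosen case-by-case on the vanishing of $H,I,J$ to reach the representatives in Table~\ref{affine}. The only cosmetic difference is that the paper folds the $y^2$-normalization into the first table rather than the second, and records the second stage as explicit tables rather than as the parameter-count discussion you give; your observation that fixing the $y^2$-coefficient forces $d=a^2>0$ and thereby removes one sign freedom (so that $J$ remains a full real modulus when $G\neq 0$ but can be made nonnegative when $G=0$) is exactly the distinction encoded in those tables.
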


\begin{proof}
Let $\theta$ be as defined in Table \ref{x^2y_high} and consider $\theta(f)\,$. It follows that $\theta(f)$ is in one of the following forms, where $c,H,I,J$ are in $\mathbb{R}\,$.
\begin{align}
c(x^2y+y^2+Hx+Iy+J) \, , \\ 
c(x^2y+Hx+Iy+J)\, .\
\end{align}
Should $\theta(f)$ be of form (5), let $\theta'$ be defined as in Table \ref{x^2y_low1}. Should $\theta(f)$ be of form (6), let $\theta'$ be defined as in Table \ref{x^2y_low2}.  Consider $\left(\theta'\circ\theta \right)(f)$ and the result follows.
\end{proof}

\begin{table}
\def\arraystretch{1.5}
\begin{tabular}{| >{\centering}m{1.15in} | >{\centering}m{2.75in} |}
\hline
$G \neq 0$ & $\big\langle \, x - \frac{F}{2} \, , \, \frac{1}{G}y-E \, \big\rangle$
\tabularnewline \hline
$G=0$ & $\big\langle \, x-\frac{F}{2} \, , \, y-E \, \big\rangle$
\tabularnewline \hline
\end{tabular}
\newline
\caption{For polynomials $x^2y+Ex^2+Fxy+Gy^2 + \lambda(f)$}\label{x^2y_high}
\end{table}

\begin{table}
\def\arraystretch{1.5}
\begin{tabular}{| >{\centering}m{1.15in} | >{\centering}m{2.75in} |}
\hline
$H \neq 0$ & $\big\langle \, {-H}^{1/3}x \, , \, H^{2/3}y \, \big\rangle$
\tabularnewline \hline
$H=0\,$, $I \neq 0$ & $\big\langle \, |I|^{1/2}x \, , \, |I|y \, \big\rangle$
\tabularnewline \hline
$H=I=0\,$, $J\neq0$ & $\big\langle \, |J|^{1/4}x \, , \, |J|^{1/2}y \, \big\rangle$
\tabularnewline \hline
$H=I=J=0$ & $\big\langle \, x \, , \, y \, \big\rangle$
\tabularnewline \hline
\end{tabular}
\newline
\caption{For polynomials $c(x^2y+y^2+Hx+Iy+J)$}\label{x^2y_low1}
\end{table}

\begin{table}
\def\arraystretch{1.5}
\begin{tabular}{| >{\centering}m{1.15in} | >{\centering}m{2.75in} |}
\hline
$HI \neq 0$ & $\big\langle \, \sigma(-HJ)|I|^{1/2}x \, , \, \sigma(-HJ) \frac{-H}{|I|^{1/2}}y \, \big\rangle$
\tabularnewline[1.5pt] \hline
$HJ \neq 0\,$, $I = 0$ & $\big\langle \, -\frac{J}{H}x \, , \, \frac{H^2}{J}y \, \big\rangle$
\tabularnewline \hline
$H\neq0\,$, $I=J=0$ & $\big\langle \, x \, , \, -Hy \, \big\rangle$
\tabularnewline \hline
$H=0\,$, $IJ\neq0$ & $\big\langle \, |I|^{1/2}x \, , \, \frac{J}{|I|}y \, \big\rangle$
\tabularnewline \hline
$H=J=0\,$, $I\neq0$ & $\big\langle \, |I|^{1/2}x \, , \, y \, \big\rangle$
\tabularnewline \hline
$H=I=0\,$, $J\neq0$ & $\big\langle \, x \, , \, -Jy \, \big\rangle$
\tabularnewline \hline
$H=I=J=0$ & $\big\langle \, x \, , \, y \, \big\rangle$
\tabularnewline \hline
\end{tabular}
\newline
\caption{For polynomials $c(x^2y+Hx+Iy+J)$}\label{x^2y_low2}
\end{table}

\begin{proposition}\label{prop_x^3}
If $f$ is a polynomial in $\mathbb{R}[x,y]$ of the form \[ x^3+Ex^2+Fxy+Gy^2+\lambda(f) \] where $E,F,G$ are real numbers and $\lambda(f)$ is linear, then $f \sim g$ for some $g$ listed in Table \ref{affine}.
\end{proposition}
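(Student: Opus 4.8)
The plan is to follow the template of the three preceding propositions: first apply a transformation $\theta$, recorded in a ``high'' table, that normalizes the homogeneous degree-two part $Ex^2+Fxy+Gy^2$ and reduces $f$ to a short list of intermediate forms; then apply a transformation $\theta'$, recorded in one or more ``low'' tables, that normalizes the surviving linear and constant coefficients. The feature distinguishing this case is that the leading form $x^3$ involves no $y$: hence for every $\theta=\langle p,q\rangle$ fixing the leading form up to a nonzero scalar one has $p=ax+e$ with $a\in\mathbb{R}^{\times}$, so the image of $x$ is affine in $x$ alone, whereas $q=bx+dy+h$ may involve both variables. I would use this asymmetry throughout, and I would split on the vanishing of $G$: the coefficient of $y^2$ in $\theta(f)$ equals $Gd^2$ and $d\neq0$, so ``$G=0$'' is an invariant of the orbit.

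In the branch $G\neq0$ I would first apply $\langle x,\,y-\frac{F}{2G}x\rangle$ to complete the square in $y$ and remove the $xy$ term (at the cost of a new $x^2$ term), then complete the cube in $x$ by a shift $\langle x-t,\,y\rangle$ to remove $x^2$, and finally complete the square in $y$ once more by a shift in $y$ to remove the linear $y$ term; after rescaling $y$ this places $f$ in the form $c\left(x^3+y^2+Hx+J\right)$. The linear $x$ term cannot be removed, since any further shift of $x$ recreates $x^2$. A short computation shows that the only automorphisms preserving this shape are the scalings $\langle ax,\,a^{3/2}y\rangle$ with $a>0$, under which $(H,J)\mapsto(H/a^2,\,J/a^3)$; in particular the signs of $H$ and $J$ are invariants. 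The low table would record the value of $a$ that selects the chosen representative, for instance fixing $|J|=1$ when $J\neq0$, or $|H|=1$ when $J=0\neq H$.

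In the branch $G=0$ the coefficient of $xy$ in $\theta(f)$ is $Fad$, so ``$F=0$'' is again an invariant. If $F=0$, a shift in $x$ completes the cube and removes $x^2$, leaving $x^3+Hx+Iy+J$; when $I\neq0$ the substitution $\langle x,\,\frac{1}{I}(y-Hx-J)\rangle$ absorbs the lower terms and gives $x^3+y$, and when $I=0$ the polynomial depends on $x$ alone, so only the scaling $x\mapsto ax$ survives and one is left with the depressed cubic $x^3+Hx+J$. If $F\neq0$ the $xy$ term cannot be removed; instead I would use the coefficient $b$ of $x$ in $q$ to cancel $x^2$ (exploiting the cross term $Fxy$), the constant $e$ of $p$ to cancel the linear $y$ term, and the constant $h$ of $q$ to cancel the linear $x$ term, arriving at $c\left(x^3+xy+J\right)$ after rescaling; the residual scalings $\langle ax,\,a^{2}y\rangle$ then send $J\mapsto J/a^3$ with $a$ of either sign, normalizing $J$ to $0$ or $1$.

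The main obstacle I anticipate is not any single computation but the fact that, unlike in Proposition \ref{prop_x^2y}, the branch $G\neq0$ and the subcase $G=F=I=0$ genuinely carry a modulus: the residual symmetry is only the one-parameter group of scalings, so the invariant $H^3/J^2$ varies continuously --- equivalently, for the one-variable cubic the affine configuration of its three roots, and for $G\neq0$ the isomorphism class of the curve $y^2=-(x^3+Hx+J)$. No finite list can exhaust these orbits. The delicate work is therefore to fix a fundamental domain for the action $(H,J)\mapsto(H/a^2,\,J/a^3)$ that names exactly one representative per orbit --- the analogue of the inequalities such as $|H|<1$ and $H+\frac{3}{4}\leq-|I|$ governing the parametrized families in Proposition \ref{prop_x^3-xy^2} --- and to verify that the normalizations placed in the low tables are mutually exclusive and jointly exhaustive.
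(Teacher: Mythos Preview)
Your plan is correct and follows the same architecture as the paper: split on $G\neq0$, then on $F\neq0$ when $G=0$, reduce to the three intermediate shapes
\[
c\bigl(x^3-y^2+Hx+Iy+J\bigr),\qquad c\bigl(x^3-xy+Hx+Iy+J\bigr),\qquad c\bigl(x^3+Hx+Iy+J\bigr),
\]
and then normalize the surviving coefficients with a ``low'' transformation. The paper compresses your $G\neq0$ step into a single map $\bigl\langle\,\sigma(-G)x+\tfrac{F^2-4EG}{12G}\,,\,\tfrac{F}{2|G|}x+\tfrac{1}{\sqrt{|G|}}y\,\bigr\rangle$, postponing the elimination of the linear $y$ term to the low table, but this is only a difference of bookkeeping. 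Your treatment of the $F\neq0$ branch (cancel $x^2$ with the $bx$ part of $q$, then kill $Iy$ and $Hx$ by shifts, leaving $x^3-xy+\gamma$ with $\gamma=I^3+IH+J$, then rescale) is exactly what the paper encodes in its Table~\ref{x^3_low2}.

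Two points to adjust. First, the proposition asks you to land in Table~\ref{affine} as written, and that table normalizes $|H|=1$ when $H\neq0$ (leaving $J\in\mathbb{R}$ free) rather than your proposed $|J|=1$ when $J\neq0$; both are valid fundamental domains for $(H,J)\mapsto(H/a^2,J/a^3)$ with $a>0$, but only the former matches the target list. Second, your closing paragraph overstates the difficulty: the continuous modulus $H^3/J^2$ is not an obstacle here, since Proposition~\ref{prop_x^3} asserts only that every $f$ hits \emph{some} $g$ in the table, and the table already carries one-parameter families. The delicate verification that distinct parameter values give inequivalent curves is deferred to Proposition~\ref{prop2_x^3}.
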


\begin{proof}
Let $\theta$ be defined as in Table \ref{x^3_high} and consider $\theta(f)\,$.

We will consider an example case where $G \neq 0\,$. Noting Table \ref{x^3_high}, we see that $\theta$ is of the form $\big\langle \, ax+r \, , \, cx+dy \, \big\rangle$ where $a=\sigma(-G)\,$, $r=\frac{F^2-4EG}{12G}\,$, $c=\frac{F}{2\:\!|G|}\,$, and $d=\frac{1}{\sqrt{|G|}}\,$. Subsequently note the terms of $\theta(f)$ of degree greater than one are equal to
\[ \left(a^3\right)x^3 + \left(3a^2r+Ea^2+Fac+Gc^2\right)x^2+\left(Fad+2Gcd\right)xy+\left(Gd^2\right)y^2 \, . \]Substituting appropriately for $a,b,c,d$ yields that the $x^2$- and $xy$-coefficients in $\theta(f)$ are \[ \frac{F^2-4EG}{4G} + E - \frac{F^2}{2G} + \frac{F^2}{4G} \, = \, 0 \, , \qquad \left(\, F\:\!\sigma(-G) + F \:\!\sigma(G)\, \right) d \, = \, 0 \, , \] respectively. Moreover, the $x^3$- and $y^2$-coefficients in $\theta(f)$ are $\sigma(-G)$ and $\sigma(G)\,$, respectively. Hence, $\theta(f)$ is of the form of (7) below.

From inspection of the remaining cases, it follows that $\theta(f)$ is in one of the following forms, where $c,H,I,J$ are in $\mathbb{R}\,$.
\begin{align}
c\left( x^3-y^2+Hx+Iy+J \right) \, , \\
c\left( x^3-xy+Hx+Iy+J \right) \, , \\
c\left( x^3+Hx+Iy+J \right) \, .
\end{align}
Should $\theta(f)$ be of form (7), let $\theta'$ be defined as in Table \ref{x^3_low1}. Should $\theta(f)$ be of form (8), let $\gamma := I^3+IH+J$ and $\theta'$ be defined as in Table \ref{x^3_low2}. Should $\theta(f)$ be of form (9), let $\theta'$ be defined as in Table \ref{x^3_low3}. Consider $\left(\theta'\circ\theta\right)(f)$ and the result follows.
\end{proof}

\begin{table}
\def\arraystretch{1.5}
\begin{tabular}{| >{\centering}m{1.15in} | >{\centering}m{2.75in} |}
\hline
$G \neq 0$ & $\big\langle \, \sigma(-G)x + \frac{F^2-4EG}{12G} \, , \, \frac{F}{2|G|}x+\frac{1}{\sqrt{|G|}}y \, \big\rangle$
\tabularnewline \hline
$G=0\,$, $F \neq 0$ & $\big\langle \, x-\frac{E}{3} \, , \, -\frac{1}{F}y \, \big\rangle$
\tabularnewline \hline
$G=F=0$ & $\big\langle \, x- \frac{E}{3} \, , \, y \, \big\rangle$
\tabularnewline \hline
\end{tabular}
\newline
\caption{For polynomials $x^3+Ex^2+Fxy+Gy^2 + \lambda(f)$}\label{x^3_high}
\end{table}

\begin{table}
\def\arraystretch{1.5}
\begin{tabular}{| >{\centering}m{1.15in} | >{\centering}m{2.75in} |}
\hline
$|H| \neq 0$ & $\big\langle \, |H|^{1/2}x \, , \, |H|^{3/4}y+\frac{I}{2} \, \big\rangle$
\tabularnewline \hline
$H=0\,$, $\frac{I^2}{4}+J \neq 0$ & $\big\langle \, \big|\frac{I^2}{4}+J\big|^{1/3}x \, , \, \big|\frac{I^2}{4}+J\big|^{1/2}y + \frac{I}{2} \, \big\rangle$
\tabularnewline \hline
$H=\frac{I^2}{4}+J=0$ & $\big\langle \, x \, , \, y+\frac{I}{2} \, \big\rangle$
\tabularnewline \hline
\end{tabular}
\newline
\caption{For polynomials $c(x^3-y^2+Hx+Iy + J)$}\label{x^3_low1}
\end{table}

\begin{table}
\def\arraystretch{1.5}
\begin{tabular}{| >{\centering}m{1.15in} | >{\centering}m{2.75in} |}
\hline
$\gamma \neq 0$ & $\big\langle \, \gamma^{1/3}x +I \, , \, 3\gamma^{1/3}Ix+\gamma^{2/3}y+3I^2+H \, \big\rangle$
\tabularnewline \hline
$\gamma = 0$ & $\big\langle \, x+I \, , \, 3Ix+y+3I^2+H \, \big\rangle$
\tabularnewline \hline
\end{tabular}
\newline
\caption{For polynomials $c(x^3-xy+Hx+Iy+J)$}\label{x^3_low2}
\end{table}

\begin{table}
\def\arraystretch{1.5}
\begin{tabular}{| >{\centering}m{1.15in} | >{\centering}m{2.75in} |}
\hline
$I \neq 0$ & $\big\langle \, x \, , \, -\frac{H}{I}x-\frac{1}{I}y-\frac{J}{I} \, \big\rangle$
\tabularnewline \hline
$I=0\,$, $H \neq 0$ & $\big\langle \, \sigma(J)|H|^{1/2}x \, , \, y \, \big\rangle$
\tabularnewline \hline
$I=H=0\,$, $J \neq 0$ & $\big\langle \, J^{1/3}x \, , \, y \, \big\rangle$
\tabularnewline \hline
$I=H=J=0$ & $\big\langle \, x \, , \, y \, \big\rangle$
\tabularnewline \hline
\end{tabular}
\newline
\caption{For polynomials $c(x^3+Hx+Iy + J)$}\label{x^3_low3}
\end{table}

To conclude that the list of polynomials in Table \ref{affine} contains only one representative from each equivalence class of degree-three polynomials in $\mathbb{R}[x,y]\,$, it remains to show that the polynomials listed are pairwise inequivalent with respect to $\sim\,$. Observing that the canonical forms $x^3+xy^2\,$, $x^3-xy^2\,$, $x^2y\,$, and $x^3$ are pairwise inequivalent with respect to $\sim\,$, we can proceed by inspecting each canonical form individually.

\begin{proposition}\label{prop2_x^3+xy^2}
Assume $f,g$ are polynomials listed in Table \ref{affine} with canonical form $x^3+xy^2\,$. If $f \sim g\,$, then $f=g\,$.
\end{proposition}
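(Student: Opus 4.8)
The plan is to first determine precisely which pairs $(\theta,c)\in\Gamma_2(\mathbb{R})\times\mathbb{R}^{\times}$ can satisfy $\theta(f)=cg$ when $f$ and $g$ both have homogeneous cubic part $x^3+xy^2$, and only then to compare the lower-order coefficients. Writing $p=\varphi(x)$ and $q=\varphi(y)$ with homogeneous linear parts $\bar p,\bar q$, note that an affine substitution cannot raise degree, so the degree-three part of $\theta(f)$ comes solely from substituting $\bar p,\bar q$ into $x^3+xy^2$ and equals $\bar p(\bar p^2+\bar q^2)=\bar p(\bar p+i\bar q)(\bar p-i\bar q)$. The requirement $\theta(f)=cg$ forces this to equal $c\,x(x+iy)(x-iy)$. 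Since $\mathbb{C}[x,y]$ is a unique factorization domain, the three linear factors must coincide up to order and scalar; as $\bar p$ is the only one of $\bar p,\bar p\pm i\bar q$ that is real, it must be a real multiple of the unique real factor $x$. A direct computation then forces $\bar p=\mu x$ and $\bar q=\varepsilon\mu y$ for some $\mu\in\mathbb{R}^{\times}$ and $\varepsilon\in\{-1,1\}$, whence $\theta=\big\langle\,\mu x+e_1\,,\,\varepsilon\mu y+e_2\,\big\rangle$ and necessarily $c=\mu^3$. I expect this rigidity step to be the main obstacle; everything afterward is coefficient bookkeeping.

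With $\theta$ and $c$ so restricted, I would expand $\theta(x^3+xy^2+Ex^2+Fxy+Gy^2)$ and collect quadratic terms, obtaining $x^2$-, $xy$-, and $y^2$-coefficients $\mu^2(3e_1+E)$, $\varepsilon\mu^2(2e_2+F)$, and $\mu^2(e_1+G)$; these must equal $\mu^3$ times the corresponding coefficients of $g$. Every representative in Table \ref{affine} with cubic part $x^3+xy^2$ has $F=G=0$ and quadratic part either $x^2$ (the polynomials arising from form (1)) or $0$ (those arising from form (2)). Feeding these normalized values into the three equations, a form-(1) polynomial ($E=1$) can match a form-(2) polynomial ($E'=0$) only if $e_1=-\frac{1}{3}$ from the $x^2$-equation and $e_1=0$ from the $y^2$-equation simultaneously, which is impossible. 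Thus no form-(1) polynomial is equivalent to a form-(2) polynomial, and the two families may be treated in isolation.

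It then remains to show $f=g$ within each family. For two form-(1) polynomials the quadratic equations yield $e_1=e_2=0$ and $\mu=1$, so $\theta=\big\langle\,x\,,\,\varepsilon y\,\big\rangle$ and $c=1$; matching the linear and constant terms gives $H=H'$, $\varepsilon I=I'$, and $J=J'$, and since the reduction normalizes $I,I'\geq 0$ we get $I=I'$, hence $f=g$. For two form-(2) polynomials the same equations give $e_1=e_2=0$, leaving $\theta=\big\langle\,\mu x\,,\,\varepsilon\mu y\,\big\rangle$ and $c=\mu^3$, so that matching the remaining terms yields $H'=H/\mu^2$, $I'=\varepsilon I/\mu^2$, and $J'=J/\mu^3$. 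Here I would invoke the normalizations recorded in Table \ref{x^3+xy^2_low}: the $y$-coefficient is either $0$ or exactly $1$, in the subfamily where it vanishes the $x$-coefficient is $0$ or $\pm 1$, and the constant term is always nonnegative. Matching any normalized nonzero coefficient forces $\mu^2=1$ and $\varepsilon=1$, and the nonnegativity of a surviving constant then forces $\mu=1$; a short run through the subcases (nonzero $y$-coefficient; zero $y$-coefficient with $x$-coefficient $\pm1$; the purely constant representatives $x^3+xy^2+1$ and $x^3+xy^2$) collapses each relation to $f=g$, completing the proof.
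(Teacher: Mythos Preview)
Your argument is correct and follows essentially the same route as the paper's proof: use the factorization of the cubic part to force the linear part of $\theta$ to be diagonal (the paper factors over $\mathbb{R}$ as $x(x^2+y^2)$ rather than over $\mathbb{C}$, but the conclusion is the same), then read off from the vanishing $xy$- and $y^2$-coefficients that the translation part is zero, and finish by inspection of the remaining coefficients. The paper compresses your explicit case analysis into a single ``the desired result follows from inspection''; your write-up is more detailed but not substantively different.
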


\begin{proof}
Let $\theta \in \Gamma_2(\mathbb{R})$ be such that $\theta(f)=cg$ for some $c \in \mathbb{R}^{\times}\,$, and express $\theta$ as $\big\langle \, Ax+By+R \, , \, Cx+Dy+S \, \big\rangle\,$ where $A,B,R,C,D,S$ are in $\mathbb{R}\,$. Given that $x^3+xy^2$ factors as $x(x^2+y^2)\,$, it follows that $B$ and $C$ must both be zero. Since the $xy$- and $y^2$-coefficients of $\theta(f)$ must be zero, it follows that $R=S=0\,$. As such, $\theta$ is of the form $\big\langle \, Ax \, , \, Dy \, \big\rangle\,$. The desired result follows from inspection.
\end{proof}

\begin{proposition}\label{prop2_x^3-xy^2}
Assume $f,g$ are polynomials listed in Table \ref{affine} with canonical form $x^3-xy^2\,$. If $f \sim g\,$, then $f=g\,$.
\end{proposition}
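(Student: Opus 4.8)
The plan is to mirror the argument for Proposition \ref{prop2_x^3+xy^2}, but to confront the fact that the leading form now factors into \emph{three} distinct real linear forms rather than one. Write $\theta = \big\langle\, Ax+By+R \,,\, Cx+Dy+S \,\big\rangle$ and suppose $\theta(f)=cg$ with $c \in \mathbb{R}^{\times}$. Since $x^3-xy^2 = x(x-y)(x+y)$, the degree-three part of $\theta(f)$ equals $(Ax+By)\big((A-C)x+(B-D)y\big)\big((A+C)x+(B+D)y\big)$, and this must be $c$ times $x(x-y)(x+y)$. By unique factorization in $\mathbb{R}[x,y]$, the three linear forms $Ax+By$, $(A-C)x+(B-D)y$, $(A+C)x+(B+D)y$ must be scalar multiples of $x$, $x-y$, $x+y$ in some order.

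First I would solve these matching conditions explicitly. Because the unique linear relation among $x$, $x-y$, $x+y$ is $(x-y)+(x+y)=2x$, the assignment is rigid: working through the six orderings shows the linear part of $\theta$ must be one of six one-parameter families, namely scalar multiples of the identity, of $\big\langle\, x\,,\,-y\,\big\rangle$, and of the four maps that permute the lines $x=0$, $x=y$, $x=-y$ nontrivially (for instance $\big\langle\, -\tfrac12(x-y)\,,\,\tfrac12(3x+y)\,\big\rangle$ and its companions). These are exactly the residual symmetries already visible in the second columns of Tables \ref{x^3-xy^2_low1} and \ref{x^3-xy^2_low2}; as a group modulo scaling they form a copy of $S_3$.

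Next I would pin down the translation $(R,S)$ and the quadratic data for each of the six linear parts. The linear part preserves degree, so the only quadratic terms in $\theta(f)$ arise from substituting the translation into the cubic and from the $-y^2$ term of $f$ (if present). Matching these against the quadratic part of $g$ forces $(R,S)$ and simultaneously determines whether $g$ can carry a $-y^2$ term. In particular I would check that the presence or absence of the $-y^2$ term is an invariant of the orbit, so that a representative coming from form (3) can never be identified with one coming from form (4); this separates Table \ref{x^3-xy^2_low1} representatives from those of Table \ref{x^3-xy^2_low2}.

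The main obstacle, and the heart of the argument, is to show that within each family the nontrivial members of the residual $S_3$ move the lower-order coefficients $(H,I,J)$ outside the ranges that define the rows of Tables \ref{x^3-xy^2_low1} and \ref{x^3-xy^2_low2}. Concretely, I expect the inequalities $|H|\le|I|$, $\big|H+\tfrac34\big|<|I|$, $H+\tfrac34\ge|I|$, and $H+\tfrac34\le-|I|$ to be precisely the conditions cutting out a fundamental domain for this action; tracking how each nontrivial linear map transforms $(H,I)$ and comparing against these inequalities should yield a contradiction unless the linear map is a positive scaling and the translation is trivial. Once $\theta$ is forced to be a scaling, the equality $\theta(f)=cg$ of normalized polynomials gives $f=g$ by inspection, exactly as in the proof of Proposition \ref{prop2_x^3+xy^2}. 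The delicate bookkeeping is precisely the verification that these four inequalities are mutually exclusive and jointly exhaustive across the six-element symmetry group.
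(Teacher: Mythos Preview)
Your plan is the paper's plan: exploit the factorization $x^3-xy^2=x(x-y)(x+y)$ to see that the linear part of $\theta$ lies in a copy of $S_3$ (times scalars), dispose of the diagonal case by forcing $R=S=0$, separate the two subfamilies by the presence or absence of the $-y^2$ term, and then track the effect of the nontrivial $S_3$ elements on the coefficients $(H,I,J)$. So the strategy is correct and matches the paper.

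There is one genuine gap in your expectations, though. You anticipate that each nontrivial $S_3$ element will push $(H,I)$ \emph{outside} the parameter region of Table~\ref{affine}, giving a contradiction ``unless the linear map is a positive scaling and the translation is trivial.'' That is not what happens. The parameter regions in Table~\ref{affine} are closed (e.g.\ $H\in[-1,1]$, or $I\ge 0$ with $H+I\le -\tfrac34$), and on the boundary the nontrivial symmetries act as stabilizers. In the paper's computation, for instance, the map with $\theta(x)=A(x+y)$ applied to $x^3-xy^2-y+Hx+J$ forces $|H\pm3|\le|H\pm1|$, which does not contradict $H\in[-1,1]$ but pins $H$ to an endpoint and then gives $\theta(f)=\pm f$; similarly, for the $-y^2$ family the nontrivial maps force $H+I=-\tfrac34$ and then $(H',I')=(H,I)$. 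So the correct endgame is not ``contradiction unless $\theta$ is trivial'' but rather ``either $(H',I')$ lies outside the region, or $(H',I')=(H,I)$ and hence $g=f$.'' Your outline becomes a proof once you carry out this boundary analysis explicitly. Also note that the inequalities you cite from Tables~\ref{x^3-xy^2_low1} and~\ref{x^3-xy^2_low2} govern the \emph{reduction} step in Proposition~\ref{prop_x^3-xy^2}; the constraints you must work against here are those in Table~\ref{affine} itself.
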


\begin{proof}
Let $\theta \in \Gamma_2(\mathbb{R})$ be such that $\theta(f)=cg$ for some $c \in \mathbb{R}^{\times}\,$, and write $\theta$ in the form $\big\langle \, Ax+By+R \, , \, Cx+Dy+S \, \big\rangle\,$ where $A,B,R,C,D,S$ are in $\mathbb{R}\,$. Since this canonical form factors as $x(x+y)(x-y)\,$, it follows that $\theta(x)$ is of the form $Ax+R\,$, $A(x+y)+R\,$, or $A(x-y)+R\,$. It is a straightforward computation to determine that $\theta(y)$ must (respectively) be of the form $\pm Ay+S\,$, $\pm A(3x-y)+S\,$, or $\pm A(3x+y)+S$. Should $\theta$ be of the form $\big\langle \, Ax+R \, , \, \pm Ay+S \, \big\rangle\,$, observe that $R$ and $S$ must be zero since the $x^2$- and $xy$-coefficients of $\theta(f)$ must be zero. As such, the desired result follows from inspection. Observing that $\theta(f)=cg$ implies $f(1,y)$ and $g(1,y)$ have equal $y^2$-coefficients will assist with the remaining cases. We will consider each individually.

Assume that $f(1,y)$ has a $y^2$-coefficient of ${-1}\,$. Should $\theta$ be of the form $\big\langle \, A(x+y)+R \, , \, \pm A(3x-y)+S \, \big\rangle\,$, observe that the $xy$- and $x^2$-coefficients of $\theta(f)$ being zero implies that $S=R=0\,$. This implies our desired result for $x^3-xy^2+1$ and $x^3-xy^2\,$. For the remaining polynomials to consider, since the $x^3$- and $y$-coefficients of $\theta(f)$ must be opposites, it follows that $-8A^3=-HA \mp A\,$. Since the $x^3$-coefficient of $\theta(f)$ must be at least as large as the $x$-coefficient of $\theta(f)$ in absolute value, we also have that $|\pm 3A+HA| \leq |-8A^3|\,$. Combining these observations yields $|H \pm 3| \leq |H \pm 1|\,$. Given that $H$ must be contained on the interval $[{-1},1]\,$, it is forced that $H=\pm 1$ and $A=\pm\frac{1}{2}\,$. As such, $\theta(f) \, = \, \pm f$ and we achieve our desired result. The case where $\theta(x)$ is of the form $A(x-y)+R$ is similar.

Assume that $f(1,y)$ has a $y^2$-coefficient of ${-2}\,$. Should $\theta$ be of the form $\big\langle \, A(x+y)+R \, , \, \pm A(3x-y)+S \, \big\rangle \,$, inspection of the $x^2$-, $xy$-, and $y^2$-coefficients yields this system of equations. (Please note that within this proof, any use of the notation $\pm$ or $\mp$ will be used such that the top symbol will correspond to the case $\theta(y)=A(3x-y)+S$ and the bottom symbol will correspond to the case $\theta(y)=-A(3x-y)+S\,$.) 
\begin{align*}
{-6 R} \mp 6 S - 9  \, &= \, 0 \, ,  \\
12 R \mp 4 S + 6  \, &= \, 0 \, , \\
2R \pm 2S - 1 \, &= \, 8A \, .
\end{align*}
Rearranging these equations and solving yields \[ \left[ \begin{array}{c} R \\ S \\ A \end{array} \right] \, =\, \left[ \begin{array}{ccc} -6 & \mp 6 & 0 \\ 12 & \mp 4 & 0 \\ 2 & \pm 2 & -8 \end{array}\right]^{-1} \left[ \begin{array}{c} 9 \\ {-6} \\ 1 \end{array} \right] \, = \, \left[ \begin{array}{c} {-3/4} \\ {\mp 3/4} \\ {-1/2} \end{array} \right]\, . \] For convenience, let us refer to the $x$- and $y$-coefficients of $\theta(f)$ as $H'$ and $I'$ respectively. With the values forced upon $R,S,A$ in the expression of $\theta\,$, it now follows that: $H'$ must be ${-\frac{9}{8}}-\frac{H}{2} \mp \frac{3I}{2}\,$; $I'$ must be ${-\frac{3}{8}}-\frac{H}{2}\pm \frac{I}{2}\,$; and $c$ is $1$ (ie. $\theta(f)=g\,$). Given the restriction on the $x$- and $y$-coefficients of $g$ from Table \ref{affine}$\,$, it further follows that \[ H'+I' \, = \, {-\frac{3}{2}}-H \mp I \, \leq \, {-\frac{3}{4}} \, . \] In the case that $\theta(y)$ is equal to $-\frac{1}{2}(3x-y)-\frac{3}{4}\,$, we get the result $-H-I \leq \frac{3}{4}\,$. Given the restriction on $H+I$ from Table \ref{affine}$\,$, it must be that $H+I$ is equal to $\frac{3}{4}\,$. As such, inspection on the expressions for $I'$ and $H'$ above yields \[ I'\,=\, {-\frac{3}{8}}-\frac{H}{2}-\frac{I}{2}+I \, = \, I\, ; \qquad  H'={-\frac{3}{4}}-I' \, . \] These equalities imply that $(H,I)$ must equal $(H',I')\,$. It follows that $\theta(f)$ must also equal $f\,$, and the desired result is achieved in this case. In the case that $\theta(y)$ is equal to $\frac{1}{2}(3x-y)+\frac{3}{4}\,$, we get the result $-H+I \leq \frac{3}{4}\,$. Subtracting $2I$ and multiplying by $-1$ yields $H+I \geq {-\frac{3}{4}}+2I\,$. Since $I$ is nonnegative, the restriction on $H+I$ from Table \ref{affine} implies that $I$ must be zero and subsequently $H$ must equal $-\frac{3}{4}\,$. Substituting accordingly into the expressions for $H'$ and $I'$ above, we have that $(H',I')$ must equal $(H,I)\,$. It follows that $\theta(f)$ must also $f\,$, and the desired result is achieved in this case as well. Should $\theta(x)$ be of the form $A(x-y)\,$, the result is similar. 
\end{proof}

\begin{proposition}\label{prop2_x^2y}
Assume $f,g$ are polynomials listed in Table \ref{affine} with canonical form $x^2y\,$. If $f \sim g\,$, then $f=g\,$.
\end{proposition}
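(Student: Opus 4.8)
The plan is to mirror the structure of Propositions~\ref{prop2_x^3+xy^2} and~\ref{prop2_x^3-xy^2}. I would take $\theta \in \Gamma_2(\mathbb{R})$ with $\theta(f) = cg$ for some $c \in \mathbb{R}^{\times}$, write $\theta = \big\langle \, Ax+By+R \, , \, Cx+Dy+S \, \big\rangle$, and first exploit the factorization of the canonical cubic to pin down the linear part of $\theta$ before examining the lower-order coefficients. The homogeneous degree-three part of $\theta(f)$ is $(Ax+By)^2(Cx+Dy)$, and it must equal $c\,x^2y$.

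First I would show the linear part of $\theta$ cannot mix $x$ and $y$. This is precisely where the present case is simpler than that of Proposition~\ref{prop2_x^3-xy^2}: the cubic $x^2y$ carries $x$ as a repeated factor and $y$ as a simple factor, so its linear factors cannot be permuted. Since $(Ax+By)^2(Cx+Dy) = c\,x^2y$ and the left-hand side has $Ax+By$ as a repeated factor, $Ax+By$ must be proportional to the repeated factor $x$; were it proportional to $y$ instead, the left-hand side would be divisible by $y^2$, which $x^2y$ is not. Hence $B=0$, and then $Cx+Dy$ must be proportional to $y$, so $C=0$, while invertibility of $\theta$ forces $A,D \neq 0$. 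Thus $\theta$ reduces to $\big\langle \, Ax+R \, , \, Dy+S \, \big\rangle$ with $AD \neq 0$.

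Next I would eliminate the translation part. Both $f$ and $g$, being listed in Table~\ref{affine} with canonical form $x^2y$, have the shape $x^2y + Gy^2 + (\text{linear}) + (\text{constant})$ with no $x^2$- or $xy$-term, since every transformation appearing in Tables~\ref{x^2y_low1} and~\ref{x^2y_low2} is diagonal. Computing $\theta(f)$ for $\theta = \big\langle \, Ax+R \, , \, Dy+S \, \big\rangle$, the $x^2$-coefficient is $A^2S$ and the $xy$-coefficient is $2ARD$; both must vanish, and since $A,D \neq 0$ this forces $S=R=0$. Therefore $\theta$ collapses to the diagonal form $\big\langle \, Ax \, , \, Dy \, \big\rangle$, under which $\theta(f) = A^2D\,x^2y + GD^2y^2 + \cdots$ and $c = A^2D$.

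The remaining step, a coefficient-by-coefficient inspection, is where I expect the real work to lie. Matching the $y^2$-coefficients shows that a representative carrying a $y^2$-term (form (5)) can only be equivalent to another such representative, and likewise for those without (form (6)); in the former situation the relation $D = A^2$ is forced, leaving a one-parameter diagonal scaling, whereas in the latter the full two-parameter torus $\big\langle \, Ax \, , \, Dy \, \big\rangle$ survives. In each family I would substitute these relations into the matched $x$-, $y$-, and constant-coefficient equations; the key point is that a \emph{nonzero} normalized coefficient rigidifies the scaling (for instance a normalized $x$-coefficient of value $-1$ yields $A^3=1$, hence $A=1$ over $\mathbb{R}$), after which any remaining free parameters of $f$ and $g$ are seen to coincide. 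The only genuine care required is to run through the individual rows of Tables~\ref{x^2y_low1} and~\ref{x^2y_low2}, verifying for each family that no admissible diagonal scaling carries one normalized representative to a distinct one, whence $f=g$.
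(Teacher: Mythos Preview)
Your proposal is correct and follows essentially the same route as the paper's proof, which is a one-line reference back to Proposition~\ref{prop2_x^3+xy^2}: use the factorization of $x^2y$ to force $B=C=0$, then inspect the $x^2$- and $xy$-coefficients to force $R=S=0$, leaving only a diagonal $\theta=\langle Ax, Dy\rangle$ and a routine inspection. Your write-up simply makes explicit the details the paper leaves implicit, including the reason the repeated factor $x$ cannot be swapped with $y$ and a sketch of how the final inspection proceeds.
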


\begin{proof}
The proof is similar to the that of Proposition \ref{prop2_x^3+xy^2} but with inspection of $xy$- and $x^2$-coefficients to yield that $R=S=0\,$.
\end{proof}

\begin{proposition}\label{prop2_x^3}
Assume $f,g$ are polynomials listed in Table \ref{affine} with canonical form $x^3\,$. If $f \sim g\,$, then $f=g\,$.
\end{proposition}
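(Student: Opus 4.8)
The plan is to follow the template of Propositions \ref{prop2_x^3+xy^2}--\ref{prop2_x^2y}, while accounting for the fact that the canonical form $x^3$ has a much larger stabilizer in $\Gamma_2(\mathbb{R})$ than the other three forms. Suppose $\theta \in \Gamma_2(\mathbb{R})$ satisfies $\theta(f) = cg$ for some $c \in \mathbb{R}^{\times}$, and write $\theta$ as $\big\langle \, Ax + By + R \, , \, Cx + Dy + S \, \big\rangle\,$. Since the homogeneous degree-three component of $\theta(f)$ is $(Ax+By)^3$ and must equal $cx^3\,$, we obtain $B = 0$ and $A^3 = c\,$; invertibility of $\theta$ forces $A, D \neq 0\,$. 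In contrast with the earlier canonical forms, the entries $C$, $R$, and $S$ are not eliminated by the cubic component alone, so the bulk of the work lies in ruling them out.

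First I would separate the three families of Table \ref{affine} arising from forms (7), (8), and (9). With $B = 0\,$, expanding $\theta(f)$ shows that its $y^2$-coefficient equals $D^2$ times the $y^2$-coefficient of $f\,$, and, whenever the latter vanishes, that its $xy$-coefficient equals $AD$ times the $xy$-coefficient of $f\,$. As $A, D \neq 0\,$, the vanishing or non-vanishing of these two coefficients is preserved by $\theta\,$. Consequently a representative descended from (7), which carries a $-y^2$ term, cannot be equivalent to one from (8) or (9), and a representative descended from (8), which carries an $-xy$ term, cannot be equivalent to one from (9).

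Within each family I would then pin down $\theta$ by reading coefficients in a fixed order. For a representative of (7) the $xy$-coefficient of $\theta(f)$ now arises from $-(Cx+Dy+S)^2$ and forces $C = 0\,$, after which the vanishing $x^2$- and $y$-coefficients force $R = 0$ and $S = 0\,$. For a representative of (8) the $x^2$-coefficient yields $C = 3AR\,$, the $y$-coefficient then yields $R = 0$ and hence $C = 0\,$, and the $x$-coefficient yields $S = 0\,$. In both families $\theta$ collapses to the diagonal form $\big\langle \, Ax \, , \, Dy \, \big\rangle\,$. For a representative of (9) the $x^2$-coefficient still forces $R = 0\,$, while $C$ and $S$ survive yet act trivially because these representatives have vanishing $y$-coefficient; the single entry $x^3 - y$ is isolated as the unique representative of (9) with nonzero $y$-coefficient. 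In each case the surviving equations are those of the $x^3$-, quadratic-, $x$-, and constant-coefficients; writing $\eta_f,\eta_g$ for the $x$-coefficients of $f,g$ and $\kappa_f,\kappa_g$ for their constant terms, these reduce to $\eta_f A = A^3 \eta_g$ and $\kappa_f = A^3 \kappa_g\,$.

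The hard part is this final comparison, which must confirm that the continuous moduli in Table \ref{affine} — such as the $K$ in $x^3 - y^2 \pm x + K$ and the $L \geq 0$ in $x^3 \pm x + L$ — are genuine invariants and that no sign ambiguity from $A = -1$ collapses distinct representatives. Because each normalized $x$-coefficient lies in $\{0,\pm 1\}\,$, the relation $\eta_f = A^2 \eta_g$ forces $A^2 = 1$ (and $\eta_f = \eta_g$) whenever the coefficient is nonzero. In the (7) family one additionally has $c = D^2 > 0\,$, so $A^3 = c > 0$ forces $A = 1$ and $c = 1\,$; then $\kappa_f = \kappa_g$ and the modulus $K$ is preserved. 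In the (9) family $D$ is unconstrained, so $A = -1$ is a priori possible, but then $\kappa_f = A^3 \kappa_g = -\kappa_g$ together with $\kappa_f, \kappa_g \geq 0$ forces $\kappa_f = \kappa_g = 0\,$; either way the modulus $L$ is preserved. The degenerate entries with vanishing $x$-coefficient (for instance $x^3 - y^2 \pm 1$, $x^3 - y^2$, $x^3 + 1$, $x^3$) are handled identically, using the positivity of $c = D^2$ or the normalized sign of the constant to pin down $\kappa_g\,$. In every case the surviving relations collapse to $\eta_f = \eta_g$ and $\kappa_f = \kappa_g\,$, whence $f = g\,$.
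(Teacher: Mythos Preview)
Your argument is correct and follows essentially the same route as the paper: force $B=0$ from the cubic part, separate the three subfamilies by the surviving $y^2$- and $xy$-coefficients (the paper phrases this equivalently as matching $\deg f(1,y)$ with $\deg g(1,y)$), reduce $\theta$ to diagonal form by reading off the remaining quadratic and linear coefficients, and finish by comparing the normalized $x$- and constant-terms. The one genuine divergence is in family~(8): to separate $x^3-xy$ from $x^3-xy+1$ you use the constant-term relation $\kappa_f = A^3\kappa_g$ together with $\kappa_f,\kappa_g\in\{0,1\}$, whereas the paper instead observes that $x^3-xy$ is reducible and $x^3-xy+1$ is not, so no automorphism can carry one to a scalar multiple of the other. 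Both arguments are valid; yours keeps the proof uniformly computational and also makes explicit the moduli comparison that the paper leaves to ``inspection,'' while the paper's reducibility shortcut avoids having to pin down $A$ in that case.
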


\begin{proof}
Let $\theta \in \Gamma_2(\mathbb{R})$ be such that $\theta(f)=cg$ for some $c \in \mathbb{R}^{\times}\,$, and write $\theta$ in the form $\big\langle \, Ax+By+R \, , \, Cx+Dy+S \, \big\rangle\,$ where $A,B,R,C,D,S$ are in $\mathbb{R}\,$. Given the canonical form $x^3\,$, it follows that $B$ is zero and $D$ is nonzero. As such, the polynomials $f(1,y)$ and $g(1,y)$ must have equal degree. We will consider three cases accordingly.

Should the degree of $f(1,y)$ be less than one, observe that the $x^2$-coefficient of $\theta(f)$ being zero implies that $R=0\,$. Hence, $\theta$ is of the form $\big\langle \, Ax \, , \, Cx+Dy+S \, \big\rangle$ and the desired result follows from inspection. Should the degree of $f(1,y)$ equal one, observe that $f$ is either $x^3-xy\,$, $x^3-xy+1\,$, or $x^3-y\,$. Noting that $B=0$ forces the $xy$-coefficient of $\theta (x^3-y)$ to be zero, it follows that $f$ equaling $x^3-y$ implies that $f=g\,$. Further, since $x^3-xy$ is reducible and $x^3-xy+1$ is irreducible, our desired result follows in this case from the fact that $\theta$ is a ring automorphism of $\mathbb{R}[x,y]\,$. Should the degree of $f(1,y)$ be greater than one, observe the the $xy$-coefficient of $\theta(f)$ being zero implies that $C=0\,$. Further inspection of the $x^2$- and $y$-coefficients of $\theta(f)$ implies that $R=S=0\,$. As such, $\theta$ must be the of the form $\big\langle \, Ax \, , \, Dy \, \big\rangle$ and the result follows by inspection.
\end{proof}

We have now established an affine classification of cubic curves, specifically one that finishes the work started in \cite{Wein}.

\begin{theorem}\label{thm1}
Every degree-three polynomial in $\mathbb{R}[x,y]$ is equivalent with respect to $\sim$ to some polynomial in Table \ref{affine}. Moreover, the polynomials in Table \ref{affine} are pairwise inequivalent with respect to $\sim\,$.
\end{theorem}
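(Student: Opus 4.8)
The plan is to assemble the theorem from the propositions already established, treating its two assertions separately. For the existence assertion I would begin with an arbitrary degree-three polynomial $f$ and isolate its homogeneous degree-three component $f_3$, a nonzero binary cubic form. The key preliminary step is to record the classification of such forms: up to the action of the linear part of $\Gamma_2(\mathbb{R})$ together with scaling by $\mathbb{R}^{\times}$, every nonzero binary cubic is carried to exactly one of $x^3+xy^2$, $x^3-xy^2$, $x^2y$, $x^3$, according to its factorization type over $\mathbb{R}$ — respectively one real linear factor times an irreducible quadratic, three distinct real linear factors, a repeated linear factor times a distinct one, and a triple linear factor. Since the translation part of an affine automorphism leaves the top-degree component unchanged while the linear part acts on $f_3$ by substitution, there is a $\theta_0 \in \Gamma_2(\mathbb{R})$ and a scalar $c_0 \in \mathbb{R}^{\times}$ so that $c_0^{-1}\theta_0(f)$ has cubic part equal to one of the four canonical forms, whence $f \sim c_0^{-1}\theta_0(f)$.

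With $c_0^{-1}\theta_0(f)$ in this shape, I would apply whichever of Propositions \ref{prop_x^3+xy^2}, \ref{prop_x^3-xy^2}, \ref{prop_x^2y}, \ref{prop_x^3} matches the canonical form of its cubic component. Each of these yields $c_0^{-1}\theta_0(f) \sim g$ for some $g$ in Table \ref{affine}; since $\sim$ is transitive, this gives $f \sim g$ and completes the existence assertion.

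For the inequivalence assertion, suppose $f$ and $g$ are entries of Table \ref{affine} with $f \sim g$, so that $\theta(f)=cg$ for some $\theta \in \Gamma_2(\mathbb{R})$ and $c \in \mathbb{R}^{\times}$. The first step is to observe that the degree-three homogeneous component of $\theta(f)$ is obtained from that of $f$ by applying the linear part of $\theta$, so the cubic components of $f$ and $g$ are equivalent under the linear-plus-scaling action. Because the four canonical forms are pairwise inequivalent under that action, $f$ and $g$ must share the same canonical form; the conclusion $f=g$ then follows by invoking the matching proposition among \ref{prop2_x^3+xy^2}, \ref{prop2_x^3-xy^2}, \ref{prop2_x^2y}, \ref{prop2_x^3}.

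The main obstacle is the input external to the reduction propositions: the pairwise inequivalence of the canonical forms under the linear action with scaling, together with the companion fact that every binary cubic reduces to one of them. I would establish both through invariants of the binary cubic that are preserved by $\Gamma_2(\mathbb{R})$ up to scaling — namely the multiplicity pattern of the roots in $\mathbb{P}^1(\mathbb{R})$ and the sign of the discriminant. These separate the three-distinct-real-roots case ($x^3-xy^2$) from the one-real-root case ($x^3+xy^2$), and both of these from the repeated-root cases ($x^2y$ and $x^3$), which are in turn distinguished by whether the repeated root is double or triple. Once these invariants are in hand the separation is immediate, and the remainder of the argument reduces to quoting the earlier propositions in the appropriate case.
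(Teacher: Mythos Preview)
Your proposal is correct and follows essentially the same approach as the paper, which simply states that the theorem follows from the propositions of the section. You supply more detail than the paper does on the preliminary classification and pairwise inequivalence of the four cubic homogeneous parts (which the paper takes as known from \cite{Wein} and merely observes), but the overall architecture---reduce the top form, apply Propositions~\ref{prop_x^3+xy^2}--\ref{prop_x^3} for existence, then Propositions~\ref{prop2_x^3+xy^2}--\ref{prop2_x^3} for inequivalence within each canonical type---is identical.
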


\begin{proof}
The result follows from the propositions of this section.
\end{proof}

\begin{table}
\def\arraystretch{1.15}
\begin{tabular}{| m{3.9in} |}
\hline
\vspace{4pt}
$x^3+xy^2+x^2+Hx+Iy+J \, ; \;\; H,J \in \mathbb{R}\, , \; I \geq 0$ \\
$x^3+xy^2+y+ Hx+J \, ; \;\; H \in \mathbb{R} \, , \; J \geq 0$ \\
$x^3+xy^2\pm x + J \, ; \;\; J \geq 0$ \\
$x^3+xy^2+1$ \\
$x^3+xy^2$ 
\vspace{4pt}
\tabularnewline \hline
\vspace{4pt}
$x^3-xy^2-y^2+Hx+Iy+J \, ; \;\; J \in \mathbb{R} \, , \; I\geq 0 \, , \; H+I \leq {-\frac{3}{4}}$ \\
$x^3-xy^2-y+Hx + J \, ; \;\; J \geq 0 \, , \;  H \in [{-1},1]$ \\
$x^3-xy^2+1$ \\
$x^3-xy^2$
\vspace{4pt}
\tabularnewline \hline
\vspace{4pt}
$x^2y+y^2-x+Iy+J \, ; \;\; I,J \in \mathbb{R}$ \\
$x^2y +y^2 \pm y + J \, ; \;\; J \in \mathbb{R}$ \\
$x^2y+y^2 \pm 1$ \\
$x^2y+y^2$ \\
$x^2y - x \pm y + J \, ; \;\; J \geq 0$ \\
$x^2y \pm y+1$ \\
$x^2y \pm y$ \\
$x^2y-x+1$ \\
$x^2y-x$ \\
$x^2y-1$ \\
$x^2y$
\vspace{4pt}
\tabularnewline \hline
\vspace{4pt}
$x^3-y^2 \pm x + J \, ; \;\; J \in \mathbb{R}$ \\
$x^3-y^2 \pm 1$ \\
$x^3-y^2$ \\
$x^3-y$ \\
$x^3-xy+1$ \\
$x^3-xy$ \\
$x^3 \pm x + J \,; \;\; J\geq 0$ \\
$x^3+1$ \\
$x^3$
\vspace{4pt}
\tabularnewline \hline
\end{tabular}
\newline
\caption{Affine/Automorphic Classification of Cubic Curves}\label{affine}
\end{table}

\section{Automorphic Classification}

We will use our recently established affine classification to develop an automorphic classification. This approach follows naturally given that the equivalence relation we define below is coarser than $\sim\,$.

\begin{definition}
If $f,g$ are polynomials in $\mathbb{R}[x,y]\,$, then we say $f \approx g$ if there exists some $\varphi \in \text{Aut}\,\mathbb{R}[x,y]$ and some $c \in \mathbb{R}^{\times}$ such that $\varphi(f) \, = \, cg\,$. More generally, if $K$ is a field and $f,g$ are polynomials in $K[x,y]\,$, then we say $f \approx_K g$ if there exists some $\phi \in \text{Aut}\,K[x,y]$ that fixes $K$ and some $c \in K^{\times}$ such that $\phi(f) \, = \, cg\,$.
\end{definition}

To allude to the additional complexity associated with the equivalence relation $\approx$ in comparison to $\sim\,$, let us consider an example involving $f=x^3-y^2$ and $g=-x^3+y^2$. If we suppose that there exists a $\theta$ in $\Gamma_2(\mathbb{R})$ such that $\theta(f)=g\,$, we can utilize the fact that $\theta$ fixes the degree of any monomial term to imply contradiction on the forced relation of $\theta(-y^2)$ being to equal $y^2\,$ (as there is no real number whose square is $-1\,$). Instead, if we suppose that there exists a $\varphi$ in $\text{Aut}\,\mathbb{R}[x,y]$ such that $\varphi(f) = g\,$, we can make no such claim about the image of $-y^2\,$. In principle, it is feasible that for some positive integer $k\,$: $\varphi(x)$ be of degree $2k\,$; $\varphi(y)$ be of degree $3k\,$; and the expressions for $\varphi(x^3)$ and $\varphi(-y^2)$ add to one another to cancel  all terms of degree greater than three to yield $-x^3+y^2\,$. With circumstances as such, additional considerations regarding $f$ and $g$ will be required to arrive at a conclusion (of contradiction) for this argument. In fact, within Proposition \ref{auto_x^3} we will work through details of a proof regarding this precise situation.

Our general strategy within this section will be to consider the equivalence classes with respect to $\approx$ that contain at least one polynomial from Table \ref{affine}. We will develop a list of those polynomials that contain exactly one representative from each equivalence class. Observing that the canonical forms $x^3+xy^2\,$, $x^3-xy^2\,$, $x^2y\,$, and $x^3\,$ have distinct factorization structures in $\mathbb{R}[x,y]$ and are hence pairwise inequivalent with respect to $\approx\,$, we can proceed by investigating each form individually.

In the two subsequent propositions, the following definition and lemma will prove useful.

\begin{definition}
If $f$ is a polynomial in $\mathbb{R}[x,y]\,$, let
\begin{align*}
\text{AutDeg}(f) \, &:= \, \min \lbrace \, \deg(g) \, \mid \, g \approx f \, \rbrace \, .
\end{align*}
\end{definition}

\begin{lemma}\label{lemma_x^2+-y^2}
$\emph{AutDeg}(x^2 \pm y^2) \, = \, 2\,.$
\end{lemma}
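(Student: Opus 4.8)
The plan is to prove the two inequalities $\text{AutDeg}(x^2\pm y^2)\le 2$ and $\text{AutDeg}(x^2\pm y^2)\ge 2$ separately. The upper bound is immediate, since $x^2\pm y^2$ is itself a degree-two polynomial in its own $\approx$-class. For the lower bound I must show that no polynomial of degree $0$ or $1$ lies in the class of $x^2\pm y^2$; that is, there is no $\varphi\in\text{Aut}\,\mathbb{R}[x,y]$ and $c\in\mathbb{R}^{\times}$ with $\varphi(x^2\pm y^2)=cg$ and $\deg g\le 1$. Throughout I would use the standard fact that a ring automorphism of $\mathbb{R}[x,y]$ restricts to the identity on $\mathbb{R}$ (since $\mathbb{R}$ is rigid as a field), so both $\varphi$ and $\varphi^{-1}$ carry constants to constants. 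Writing $p=\varphi(x)$ and $q=\varphi(y)$, the equation to rule out becomes $p^2\pm q^2=cg$. Note first that $cg\neq 0$, since $\varphi$ is injective and $c\in\mathbb{R}^{\times}$, so $g$ has a well-defined degree.

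The degree-zero case is uniform in the sign: if $cg$ were a nonzero constant, then applying the constant-preserving automorphism $\varphi^{-1}$ would give $x^2\pm y^2=\varphi^{-1}(cg)\in\mathbb{R}$, contradicting that $x^2\pm y^2$ is non-constant. For the sign $+$ with $\deg g=1$ I would argue by positivity: the value $p^2+q^2$ is nonnegative at every real point, whereas $cg$ is a non-constant affine function and therefore surjective onto $\mathbb{R}$, so it assumes negative values. This contradiction rules out a degree-one image for $x^2+y^2$ and completes that sign.

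The genuine obstacle is the sign $-$, where the leading homogeneous parts of $p^2$ and $q^2$ may cancel, so a naive degree count on $p^2-q^2$ is inconclusive. Instead I would exploit unique factorization in $\mathbb{R}[x,y]$. Factoring $p^2-q^2=(p-q)(p+q)=cg$ and noting that a degree-one polynomial over a field is irreducible, the product on the left must be trivial: since neither factor can vanish (their product $cg$ is nonzero), one of $p-q$, $p+q$ is a unit, i.e. a nonzero constant $\lambda$. Treating the two possibilities symmetrically, suppose $p-q=\lambda$; then $\varphi(x-y)=\lambda=\varphi(\lambda)$, and injectivity of $\varphi$ forces $x-y=\lambda$, which is absurd. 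The case $p+q=\lambda$ is identical with $x+y$ in place of $x-y$. This rules out a degree-one image for $x^2-y^2$. Combining all cases yields $\text{AutDeg}(x^2\pm y^2)\ge 2$, and together with the upper bound the lemma follows. I expect the factorization step in the $-$ case to be the crux: the reducibility pattern of $x^2-y^2$ is inherited by its image, and that inherited pattern is incompatible with $\varphi$ being an automorphism.
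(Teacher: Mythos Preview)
Your proof is correct, and it follows a genuinely different route from the paper's.

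The paper argues geometrically: an element of $\text{Aut}\,\mathbb{R}[x,y]$ induces a bi-polynomial homeomorphism of $\mathbb{R}^2$, so the zero locus of $x^2-y^2$ (two crossing lines) and of $x^2+y^2$ (a single point) must be carried to homeomorphic zero loci. Since the zero set of a polynomial of degree at most one is $\mathbb{R}^2$, a line, or empty, none of these can match, and the lower bound follows in one stroke for both signs.

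Your argument is purely algebraic and treats the two signs separately. For $x^2+y^2$ you use nonnegativity of a sum of real squares versus surjectivity of an affine function; for $x^2-y^2$ you use the factorization $(p-q)(p+q)$ together with irreducibility of a degree-one polynomial in the UFD $\mathbb{R}[x,y]$ to force one of $p\pm q$ to be a constant, contradicting injectivity of $\varphi$. Both steps are clean and self-contained. The degree-zero case via $\varphi^{-1}$ is also fine, and your remark that any ring automorphism of $\mathbb{R}[x,y]$ fixes $\mathbb{R}$ (because $\mathbb{R}$ has no nontrivial field automorphisms) is the right justification.

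What each approach buys: the paper's geometric argument is shorter and uniform in the sign, but it tacitly invokes the correspondence between $\text{Aut}\,\mathbb{R}[x,y]$ and polynomial automorphisms of $\mathbb{R}^2$, plus a topological distinction between the possible zero sets. Your approach avoids any topology and stays entirely inside the ring, at the cost of handling the two signs by different mechanisms. Your factorization argument for the minus sign is in fact a bit more robust: it would go through over any field, whereas the paper's ``isolated point'' description of the zero set of $x^2+y^2$ and the ``crossing lines'' picture for $x^2-y^2$ are specific to the real locus.
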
 

\begin{proof}
Observe that $x^2-y^2=0$ defines crossing lines in $\mathbb{R}^2$ and $x^2+y^2=0$ defines an isolated point in $\mathbb{R}^2\,$. Also observe that neither of these curves could be mapped to a line, an empty variety, or all of $\mathbb{R}^2$ by a map in $\text{Aut} \, \mathbb{R}[x,y]$ (ie. a bi-polynomial homeomorphism of $\mathbb{R}^2\,$). Hence, $\text{AutDeg}(x^2 \pm y^2)$ could not be less than two.
\end{proof}

\begin{proposition}\label{auto_x^3+xy^2}
Assume $f,g$ are polynomials listed in Table \ref{affine} with canonical form $x^3+xy^2\,$. If $f \approx g\,$, then $f = g\,$.
\end{proposition}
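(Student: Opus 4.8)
The plan is to reduce the automorphic statement to the affine one already settled in Proposition \ref{prop2_x^3+xy^2}. Concretely, I would show that any $\varphi \in \text{Aut}\,\mathbb{R}[x,y]$ witnessing $f \approx g$ must in fact be affine; once $\varphi \in \Gamma_2(\mathbb{R})$ we have $f \sim g$, and Proposition \ref{prop2_x^3+xy^2} immediately forces $f = g$. Thus the entire content lies in ruling out non-affine witnesses, and the argument will be a leading-form computation.

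To set this up I would write $p = \varphi(x)$ and $q = \varphi(y)$, put $a = \deg p$ and $b = \deg q$, and let $\bar p, \bar q$ denote the respective top-degree homogeneous components. Since $\varphi$ is an automorphism, neither $p$ nor $q$ is constant, so $a, b \geq 1$; affineness is precisely the assertion $a = b = 1$. Because $\varphi(f) = cg$ with $g$ of degree three, the governing constraint is $\deg \varphi(f) = 3$. Expanding $\varphi(f) = p^3 + pq^2 + Ep^2 + Fpq + Gq^2 + \lambda(p,q)$, I would note that the images of the quadratic and linear terms of $f$ have degree at most $2\max(a,b)$, whereas the image $p^3 + pq^2$ of the cubic part has degree $\max(3a,\,a+2b) > 2\max(a,b)$ for all $a,b \geq 1$. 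Hence the top-degree form of $\varphi(f)$ equals the top-degree form of $p^3 + pq^2$, and the lower terms of $f$ are irrelevant to the leading behavior.

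The heart of the proof is a three-way split on $a$ versus $b$. If $a > b$, the degree-$3a$ form $\bar p^3$ survives uncancelled, forcing $3a = 3$ and hence $b < a = 1$, an impossible constant $q$. If $a < b$, the degree-$(a+2b)$ form $\bar p\,\bar q^2$ survives, forcing $a + 2b = 3$, which has no solution with $1 \leq a < b$. The decisive case is $a = b$: here $p^3$ and $pq^2$ both contribute in degree $3a$, giving top-degree form $\bar p(\bar p^2 + \bar q^2)$. I expect this to be the main obstacle, since for a non-affine automorphism the leading forms $\bar p,\bar q$ are tightly constrained, and the analogous cancellation genuinely does occur for the companion form $x^3 - xy^2$, where $\bar p^2 - \bar q^2$ can vanish. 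What saves the present case is the $+$ sign: over $\mathbb{R}$ a sum of squares of polynomials vanishes only when each summand does, so $\bar p^2 + \bar q^2 = 0$ would force $\bar p = \bar q = 0$, contradicting that these are genuine leading forms. This is exactly the positivity recorded by $\text{AutDeg}(x^2 + y^2) = 2$ in Lemma \ref{lemma_x^2+-y^2}, which I would invoke to close the case cleanly.

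Consequently the degree-$3a$ part is nonzero, forcing $a = 1$, so $\varphi$ is affine in every case. Having reduced to $f \sim g$, the proof concludes by citing Proposition \ref{prop2_x^3+xy^2} to obtain $f = g$.
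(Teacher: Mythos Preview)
Your proposal is correct and follows essentially the same route as the paper: a three-way split on $\deg p$ versus $\deg q$, with the decisive equal-degree case handled via the non-vanishing of $\bar p^2+\bar q^2$ over $\mathbb{R}$ (equivalently, Lemma~\ref{lemma_x^2+-y^2}), reducing to the affine result. Your leading-form bookkeeping is in fact a bit more explicit than the paper's, and citing Proposition~\ref{prop2_x^3+xy^2} rather than Theorem~\ref{thm1} is an immaterial difference.
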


\begin{proof}
Let $\varphi \in \text{Aut}\,\mathbb{R}[x,y]$ be such that $\varphi(f)=cg$ for some $c \in \mathbb{R}^{\times}\,$, and express $\varphi$ in the form $\big\langle \, p \, , \, q \, \big\rangle\,$. We will consider three cases. Suppose $\deg(p)$ is greater than $\deg(q)\,$, and observe $\deg(g)$ would be determined by $\deg(p^3)\,$. This leads to contradiction since $\deg(p)$ must be at least two in this case. Supposing $\deg(p)$ is less than $\deg(q)$ leads to a similar contradiction when inspecting the value of $\deg(pq^2)\,$. It is left to consider when $\deg(p)$ and $\deg(q)$ are equal. Suppose that $p,q$ are not linear. From inspection of the polynomials in Table \ref{affine}, it follows that $p(p^2+q^2)$ must be of degree three or less. This demands $\deg(p)=2$ and $\deg(p^2+q^2)=1\,$. But, this is a contradiction since $\text{AutDeg}(x^2+y^2)$ equals two via Lemma \ref{lemma_x^2+-y^2}. Hence, $p,q$ must both be linear, and the result follows from Theorem \ref{thm1}.
\end{proof}

\begin{proposition}\label{auto_x^3-xy^2}
Assume $f,g$ are polynomials listed in Table \ref{affine} with canonical form $x^3-xy^2\,$. If $f \approx g\,$, then $f = g\,$.
\end{proposition}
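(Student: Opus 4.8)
The plan is to reproduce the structure of Proposition~\ref{auto_x^3+xy^2}: reduce to the case that $\varphi=\langle\,p\,,q\,\rangle$ is affine, and then invoke Theorem~\ref{thm1}. Writing $\varphi(f)=cg$ with $c\in\mathbb{R}^{\times}$ gives $\deg\varphi(f)=3$. The cases $\deg p\neq\deg q$ are handled verbatim as for $x^{3}+xy^{2}$: if $\deg p>\deg q$, then $p^{3}$ is the unique top-degree term of $\varphi(f)$, of degree $3\deg p$, which beats both $pq^{2}$ (degree $\deg p+2\deg q$) and the image of the quadratic-and-lower part of $f$ (degree at most $2\deg p$); since an automorphism forces $\deg p,\deg q\geq 1$ and hence $\deg p\geq 2$ here, we get $\deg\varphi(f)=3\deg p\geq 6$, a contradiction. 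The subcase $\deg p<\deg q$ is symmetric, using $pq^{2}$ of degree at least $5$.

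The one case that demands real work is $\deg p=\deg q=:d$, where I assume $d\geq 2$ and seek a contradiction. Let $\bar p,\bar q$ denote the degree-$d$ leading forms. The degree-$3d$ part of $\varphi(f)$ can only come from the cubic image $p^{3}-pq^{2}=p\,(p^{2}-q^{2})$, and equals $\bar p\,(\bar p^{2}-\bar q^{2})$. This is precisely where the minus case parts ways with Proposition~\ref{auto_x^3+xy^2}: there the cubic image is $p\,(p^{2}+q^{2})$ and $\bar p^{2}+\bar q^{2}$ is a nonzero sum of squares, so the top form can never cancel; here $\bar p^{2}-\bar q^{2}$ does vanish, exactly when $\bar q=\pm\,\bar p$, and this degenerate configuration must be ruled out separately.

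So the goal is to show that $\deg(p^{2}-q^{2})>\deg p$ in either configuration; then $\deg\bigl(p\,(p^{2}-q^{2})\bigr)>2d$, which exceeds the degree (at most $2d$) of the image of the quadratic-and-lower part of $f$, so the leading form of $p\,(p^{2}-q^{2})$ survives in $\varphi(f)$, forcing $\deg\varphi(f)>2d\geq 4$ and contradicting $\deg\varphi(f)=3$. If $\bar q\neq\pm\,\bar p$ this is immediate, since $\deg(p^{2}-q^{2})=2d>d$. If $\bar q=\epsilon\,\bar p$ with $\epsilon\in\{-1,+1\}$, set $r:=q-\epsilon p$, so $\deg r<d$. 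Lemma~\ref{lemma_x^2+-y^2}, applied to $p^{2}-q^{2}=\varphi(x^{2}-y^{2})\approx x^{2}-y^{2}$, gives $\deg(p^{2}-q^{2})\geq 2$, so in particular $p^{2}-q^{2}\neq0$ and $r\neq0$; but the sharper fact $\deg r\geq 1$ is what I actually need, and for this the non-degeneracy of $\varphi$ is decisive: its Jacobian $p_{x}q_{y}-p_{y}q_{x}=p_{x}r_{y}-p_{y}r_{x}$ must be a nonzero constant, which is impossible when $r$ is constant. Thus $1\leq\deg r<d$, and $p^{2}-q^{2}=-r\,(2\epsilon p+r)$ has degree $d+\deg r>d$, as required.

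With the nonlinear possibilities eliminated, $p$ and $q$ are linear, so $\varphi\in\Gamma_{2}(\mathbb{R})$ and $f\sim g$; Theorem~\ref{thm1} then yields $f=g$. I expect the main obstacle to be exactly the collapse $\bar q=\pm\,\bar p$, which has no counterpart in the $x^{3}+xy^{2}$ setting: when it occurs the leading form of the cubic image vanishes and Lemma~\ref{lemma_x^2+-y^2} alone is too weak (it permits $r$ a nonzero constant, in which case $\deg(p^{2}-q^{2})=d$ and cancellation with the quadratic image is possible). It is the constant nonzero Jacobian of the automorphism that rescues the degree bound, ruling out $q=\pm p+\text{(constant)}$ and thereby forcing $\deg(p^{2}-q^{2})>\deg p$.
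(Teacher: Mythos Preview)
Your argument is correct and follows the same overall strategy as the paper: rule out non-affine $\varphi$ by a degree count on $p(p^{2}-q^{2})$ versus the image of the lower-order terms, then invoke Theorem~\ref{thm1}. The organization differs in two small but worthwhile ways. First, the paper splits on whether $f(1,y)$ has $y^{2}$-coefficient $-1$ or $-2$, handling the first case by quoting Proposition~\ref{auto_x^3+xy^2} and only writing out the second; your treatment is uniform across both families. Second, at the crux---ruling out the collapse $\bar q=\pm\bar p$---the paper argues that the degree constraint would force one of $p\pm q$ to be a nonzero constant, which violates the \emph{algebraic independence} of $p$ and $q$; you instead use that the \emph{Jacobian} $p_{x}q_{y}-p_{y}q_{x}=p_{x}r_{y}-p_{y}r_{x}$ of an automorphism is a nonzero constant to conclude $r=q-\epsilon p$ cannot be constant, hence $\deg(p^{2}-q^{2})=d+\deg r>d$ directly. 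Both devices encode the same obstruction ($q=\pm p+\text{const}$ is impossible for an automorphism), so the proofs are essentially equivalent; your Jacobian formulation is a touch more explicit about why Lemma~\ref{lemma_x^2+-y^2} alone does not suffice.
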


\begin{proof}
Let $\varphi \in \text{Aut}\,\mathbb{R}[x,y]$ be such that $\varphi(f)=cg$ for some $c \in \mathbb{R}^{\times}\,$, and express $\varphi$ in the form $\big\langle \, p \, , \, q \, \big\rangle\,$. Should $f(1,y)$ have a $y^2$-coefficient of ${-1}\,$, the result follows similarly to Proposition \ref{auto_x^3+xy^2} using the fact that $\text{AutDeg}(x^2-y^2)$ equals two via Lemma \ref{lemma_x^2+-y^2}. Hence, it remains to consider when $f(1,y)$ has a $y^2$-coefficient of ${-2}\,$. We will proceed in three cases. Suppose $\deg(p)$ is greater than $\deg(q)\,$, and observe that $\deg(g)$ would be determined by $\deg(p^3)\,$. This leads to contradictions since $\deg(p)$ must be at least two in this case. Supposing $\deg(p)$ is less than $\deg(q)$ leads to a similar result upon inspecting the degree of $\deg(pq^2)\,$. It remains to consider the case when $\deg(p)$ and $\deg(q)$ are equal. Suppose $p,q$ are not linear and note that $\varphi(f)$ is of the form $p(p^2-q^2+H)+(-q^2+Iq+J)\,$. Since $\text{AutDeg}(x^2-y^2)$ is not less than two, the restriction on $\deg(g)$ forces the relation \[ \deg(p)+\deg(p^2-q^2+H) \, = \, 2\deg(q)\, . \]Since we are in the case where $\deg(p)$ equals $\deg(q)\,$, it follows that $\deg(p^2-q^2)$ is equal to $\deg(p)\,$. In particular, this implies that $\deg(p+q)$ and $\deg(p-q)$ could not both be equal to $\deg(p)\,$. But, observe that should either $\deg(p+q)$ or $\deg(p-q)$ be less than $\deg(p)\,$, then the conjugate factor must have degree of exactly $\deg(p)\,$. This implies that either $p+q$ or $p-q$ must be a constant, which violates the algebraic independence of $p$ and $q\,$. This contradiction implies that $p,q$ must be linear and the result follows from Theorem \ref{thm1}.
\end{proof}

\begin{proposition}\label{auto_x^2y}
Assume $f,g$ are polynomials listed in Table \ref{affine} with canonical form $x^2y\,$. If $f \approx g\,$, then $f = g\,$.
\end{proposition}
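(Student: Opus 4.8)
The plan is to write $\varphi = \langle \, p \, , \, q \, \rangle$ and reduce to a comparison of $a := \deg(p)$ and $b := \deg(q)$, mirroring Propositions \ref{auto_x^3+xy^2} and \ref{auto_x^3-xy^2}, but with attention to a genuinely new phenomenon. Since every list entry with canonical form $x^2y$ has the shape $f = x^2y + Gy^2 + \alpha x + \beta y + \gamma$ with $G \in \{0,1\}$ (the affine normalization in Table \ref{x^2y_high} kills the $x^2$- and $xy$-terms), we have $\varphi(f) = p^2q + Gq^2 + \alpha p + \beta q + \gamma$, and we need $\deg(\varphi(f)) = 3$. First I would dispose of $a > b$ (the term $p^2q$ has uncancelled leading form of degree $2a+b \ge 5$) and $a = b$ (the leading form $\bar p^2 \bar q$ of $p^2q$ has degree $3a$ and cannot cancel, forcing $a = 1$, so $\varphi$ is affine and Theorem \ref{thm1} applies). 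The case $a < b$ splits by comparing $2a+b$ with $2b$: when $G = 0$, or when $2a \neq b$, the highest-degree form survives and forces a degree that is either at least $4$ or even, a contradiction.

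The one surviving configuration, and the crux of the proposition, is $G = 1$, $b = 2a$, with the degree-$2b$ parts of $p^2q$ and $q^2$ cancelling, i.e.\ $\bar q = -\bar p^2$. This is precisely the behaviour of a triangular automorphism such as $\langle \, x \, , \, y - x^2 \, \rangle$, so unlike the previous two propositions we cannot conclude that $\varphi$ is affine; genuine non-affine automorphisms do preserve this canonical form, and this is the main obstacle. To handle it, set $t := p^2 + q + \beta$, so that $\varphi(f) = q\,t + \alpha p + \gamma$. Algebraic independence of $p,q$ forbids $t = 0$, and since the leading forms of $q$ and $t$ cannot cancel in their product, $\deg(q\,t) = 2a + \deg(t) \ge 2a$; as $\deg(\alpha p) \le a < 2a$, this term's leading part survives, whence $2a + \deg(t) = 3$. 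This forces $a = 1$ and $\deg(t) = 1$.

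With $a = 1$ the map becomes explicit: $p$ is linear and $q$ is quadratic. Matching the cubic part $-\bar p^2 \bar t = c\,x^2y$ and invoking unique factorization forces $\bar p \propto x$, hence $p = Ax + R$ and $q = -A^2x^2 + \mu x + \nu y + \rho$ with $\nu \neq 0$. I would then expand $\varphi(f)$ and match coefficients against $cg$: vanishing of the $x^2$- and $xy$-coefficients (which are absent from every $G=1$ entry of Table \ref{affine}) forces $R = 0$ and $R^2 + \rho + \beta = 0$, the $y^2$-coefficient forces $\nu = -A^2$ and $c = A^4$, and the result collapses to $g = x^2y + y^2 + \tfrac{\alpha}{A^3}x + \tfrac{\beta}{A^2}y + \tfrac{\gamma}{A^4}$. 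Finally, in each $G=1$ family the leading nonvanishing coefficient among the $x$-, $y$-, and constant terms is normalized in Table \ref{affine} to a value in $\{-1\}$ or $\{\pm 1\}$; since $g$'s corresponding coefficient equals that of $f$ divided by a positive power of $A$, this forces $A^3 = 1$ or $A^2 = 1$, and with the sign of $A^2, A^4$ already positive we obtain $f = g$. The delicate part throughout is recognizing and then fully resolving this triangular case; the remaining work is routine bookkeeping on degrees and coefficients.
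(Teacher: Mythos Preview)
Your argument is correct, and the overall case split on $a=\deg(p)$ versus $b=\deg(q)$ matches the paper's, as does the reduction to the single nontrivial configuration $G=1$, $\deg(p)=1$, $\deg(q)=2$, $\deg(p^2+q+\beta)=1$. Where you diverge is in how you finish that case. You pin down $\varphi$ explicitly by matching cubic, $x^2$-, $xy$-, and $y^2$-coefficients, obtain $g=x^2y+y^2+\tfrac{\alpha}{A^3}x+\tfrac{\beta}{A^2}y+\tfrac{\gamma}{A^4}$, and then use the Table~\ref{affine} normalizations to force the relevant power of $A$ to be $1$. The paper instead argues geometrically: since the curves $x^2+y+I=0$ and $x=0$ meet in a single point, their images $f_1=0$ and $p=0$ under $\varphi^{-1}$ are crossing lines, so there exists $\theta\in\Gamma_2(\mathbb{R})$ with $\theta(p)=x$ and $\theta(f_1)=-y$; one then checks directly that $\theta(\varphi(f))=f$, whence $g\sim f$ and Theorem~\ref{thm1} finishes. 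The paper's route avoids all coefficient bookkeeping and makes the structural point that the non-affine part of $\varphi$ can be undone affinely; yours is more hands-on but entirely self-contained and does not need the geometric observation about intersection numbers. One small remark: algebraic independence of $p,q$ actually forbids $t$ from being \emph{any} constant, not just zero---though as you implicitly use, the nonzero-constant case is also excluded by $2a+0=3$ having no integer solution.
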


\begin{proof}
Let $\varphi \in \text{Aut}\,\mathbb{R}[x,y]$ be such that $\varphi(f)=cg$ for some $c \in \mathbb{R}^{\times}\,$, and express $\varphi$ in the form $\big\langle \, p \, , \, q \, \big\rangle\,$. We will consider three cases. First, supposing $\deg(q)$ is less than $\deg(p)$ implies that $\deg(g)$ must be more than three. This is a contradiction. Second, assume $\deg(q)$ equals $\deg(p)\,$. The fact that $\deg(g)$ equals three forces $p,q$ to be linear, and hence $\varphi$ is in $\Gamma_2(\mathbb{R})$ and our result follows from Theorem \ref{thm1}. Finally, assume $\deg(q)$ is greater than $\deg(p)\,$, and note from Table \ref{affine} that $\varphi(f)$ can be written as $q \, f_1 + f_2 \,$, where \[ f_1 \in \{ \, p^2+q+I \, ,\, p^2 \pm 1 \, , \, p^2 \,\; \vert \;\, I \in \mathbb{R} \, \} \, ; \;\, f_2 \in \{ \, -p+J \, , \, J \,\; \vert \;\, J \in \mathbb{R} \, \} \, . \] Since $p,q$ are algebraically independent, the value of $\deg(f_1)$ in $\mathbb{R}[x,y]$ must be at least one. It follows that $\deg(q)=2\,$, $\deg(f_1)=1\,$, and $\deg(p)=1\,$. Supposing that $f_1$ is of the form $p^2$ or $p^2\pm 1$ leads to contradiction, as $\deg(f_1)$ could not be one. Hence, $f_1$ must be of the form $p^2+q+I$ for some $I \in \mathbb{R}\,$. Observing that the curves of $\mathbb{R}[x,y]$ associated with $x^2+y+I=0$ and $x=0$ have exactly one intersection point, it follows that the lines of $\mathbb{R}[x,y]$ associated with $f_1=0$ and $p=0$ are crossing lines. Hence, we can apply some $\theta \in \Gamma_2(\mathbb{R})$ such that $\theta(f_1)=-y$ and $\theta(p)=x\,$. This implies that $\theta(f_2)=f_2$ and yields \[ -y \, = \, \theta(f_1) \, = \, \theta(p^2+q+I) \, = \, x^2 + \theta(q)+I\, . \] Hence, $\theta(q)$ is equal to $-y-x^2-I$ and it follows that \[ \theta\left( \, \varphi(f) \, \right) \, = \,\theta\left( \, q(p^2+q+I) + f_2 \, \right) \, = \, (-y-x^2-I)(-y) + f_2 \, = \, f \, . \] As such, $\varphi(f)$ and $f$ are equivalent via $\sim\,$, and Theorem \ref{thm1} implies that $g$ and $f$ must be equal.
\end{proof}

In the subsequent proposition, the following definition and lemma will prove useful. This definition is adapted from \cite{Ab-H-S}.

\begin{definition}\label{def_sets}
If $f$ is a polynomial in $\mathbb{R}[x,y]\,$, $K$ is a field, and $g$ is a polynomial in $K[x,y]\,$, let
\begin{align*}
\text{cusp}(f) \, &:= \, \{ \, r \in \mathbb{R} \, \mid \, f-r=0 \text{ has a cusp in } \mathbb{R}^2 \, \}\, , \\
\text{isol}(f) \, &:= \, \{ \, r \in \mathbb{R} \, \mid \, f-r=0 \text{ has an isolated point in } \mathbb{R}^2 \, \}\, , \\
\text{node}(f) \, &:= \, \{ \, r \in \mathbb{R} \, \mid \, f-r=0 \text{ has a node in } \mathbb{R}^2 \, \}\, , \\
\text{red}(g) \, &:= \, \{ \, s \in K \, \mid \, g-s \text{ is reducible in } K[x,y] \, \} \, , \\
\text{sing}(g) \, &:= \, \{ \, s \in K \, \mid \, g-s=0 \text{ has a singular point in } K^2 \, \} \, .
\end{align*}
\end{definition}

\begin{lemma}\label{lemma_sets}
Let $f,g$ be polynomials in $K[x,y]\,$. If $f \approx_K g\,$, then
\begin{align*}
\emph{red}(f) \, &= \, \{ \, cr \, \mid \, r \in \emph{red}(g) \, \} \, .
\end{align*}
A similar statement can be made about $\emph{sing}(f)\,$. Should $K$ equal $\mathbb{R}\,$, additional similar statements can be made about $\emph{cusp}(f)\,$, $\emph{isol}(f)\,$, and $\emph{node}(f)\,$.
\end{lemma}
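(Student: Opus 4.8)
The plan is to route everything through the defining relation $\phi(f)=cg$ together with the single algebraic identity
\[
\phi(f-s) \,=\, \phi(f)-\phi(s) \,=\, cg - s \,=\, c\!\left(g-c^{-1}s\right),
\]
which holds for every $s\in K$ precisely because $\phi$ fixes $K$. Since $\phi$ is a ring automorphism it carries units to units bijectively, and the units of $K[x,y]$ are exactly $K^{\times}$; hence a factorization $h=h_1h_2$ is nontrivial if and only if $\phi(h)=\phi(h_1)\phi(h_2)$ is, so $h$ is reducible in $K[x,y]$ exactly when $\phi(h)$ is. Applying this to $h=f-s$, and observing that multiplication by the unit $c$ does not affect reducibility, we get that $f-s$ is reducible iff $g-c^{-1}s$ is reducible, i.e. $s\in\text{red}(f)$ iff $c^{-1}s\in\text{red}(g)$. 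Writing $s=cr$ gives $\text{red}(f)=\{\,cr \mid r\in\text{red}(g)\,\}$, which is the first claim.

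For the statement about $\text{sing}$ I would pass to the polynomial automorphism $\Phi$ of the affine plane associated with $\phi$, with components $(\phi(x),\phi(y))$ and characterized by $\phi(h)=h\circ\Phi$ for all $h$. Because $\phi$ fixes $K$ these components lie in $K[x,y]$, so $\Phi$ restricts to a bijection of $K^{2}$; and because $\Phi^{-1}$ is again a polynomial map, the chain rule forces $\det D\Phi$ to be a unit, hence a nonzero constant. Consequently $\nabla(h\circ\Phi)(P)=(D\Phi(P))^{T}\,\nabla h(\Phi(P))$ with $D\Phi(P)$ invertible at every $P$, so $P$ is a singular point of $\{\phi(h)=0\}$ exactly when $\Phi(P)$ is a singular point of $\{h=0\}$. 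Combined with $\phi(f-s)=c(g-c^{-1}s)$ and the fact that scaling by $c$ leaves the zero set and its singular locus unchanged, this yields $s\in\text{sing}(f)$ iff $c^{-1}s\in\text{sing}(g)$, and the identity for $\text{sing}$ follows as before.

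The cusp, node, and isolated-point statements (for $K=\mathbb{R}$) I would handle by the same transfer, now invoking that $\Phi$ is not merely a bijection but a real-analytic diffeomorphism of $\mathbb{R}^{2}$ with real-analytic inverse, since its inverse is polynomial. Such a map carries $\{\phi(f-s)=0\}$ onto $\{f-s=0\}$ preserving local analytic type, hence sends cusps to cusps and nodes to nodes, and as a homeomorphism sends isolated real points to isolated real points; the scaling by $c$ then converts $\text{cusp}(g)$, $\text{node}(g)$, and $\text{isol}(g)$ into the corresponding sets for $f$, exactly as above. I expect the main obstacle to lie in this geometric transfer rather than in the algebra: specifically, in justifying cleanly that the induced $\Phi$ has everywhere-invertible Jacobian and that a real-analytic diffeomorphism preserves the fine local classification into cusp, node, and isolated point, not merely the coarse presence of a singularity. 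The reducibility conclusion, by contrast, is essentially immediate once one records that $\phi$ is a unit-preserving ring automorphism.
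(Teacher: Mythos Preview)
Your proof is correct and rests on the same key identity as the paper: the paper's entire proof is the single observation $\phi(f-cr)=c(g-r)$, which is exactly your $\phi(f-s)=c(g-c^{-1}s)$ after the substitution $s=cr$. You go considerably further than the paper does, supplying the justifications (unit-preservation for reducibility, the Jacobian/chain-rule argument for singularity, the diffeomorphism argument for cusp/node/isolated-point type) that the paper leaves entirely to the reader.
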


\begin{proof}
Let $\phi \in \text{Aut} \, K[x,y]$ be such that $\phi$ fixes $K$ and $\phi(f)=cg\,$. Observe that $\phi(f-cr) \, = \, c(g-r)\,$.
\end{proof}

The information in Table \ref{table_sets} is straightforward to compute. The contents will be useful when proving the proposition below.

\begin{table}
\def\arraystretch{1.5}
\begin{tabular}{| >{\centering}m{1in} | >{\centering}m{0.5in} | >{\centering}m{0.5in} | >{\centering}m{0.5in} | >{\centering}m{0.5in} |}
\hline
$f$ & $\text{cusp}(f)$ & $\text{isol}(f)$ & $\text{node}(f)$ & $\text{red}(f)$
\tabularnewline \hline\hline
$x^3-y^2+x+J$ & $\emptyset$ & $\emptyset$ & $\emptyset$ & $\emptyset$
\tabularnewline \hline
$x^3-y^2-x+J$ & $\emptyset$ & $J+\frac{2\sqrt{3}}{9}$ & $J-\frac{2\sqrt{3}}{9}$ & $\emptyset$
\tabularnewline \hline
$x^3-y^2\pm 1$ & $\pm 1$ & $\emptyset$ & $\emptyset$ & $\emptyset$
\tabularnewline \hline
$x^3-y^2$ & $0$ & $\emptyset$ & $\emptyset$ & $\emptyset$
\tabularnewline \hline
$x^3-y$ & $\emptyset$ & $\emptyset$ & $\emptyset$ & $\emptyset$
\tabularnewline \hline
$x^3-xy+1$ & $\emptyset$ & $\emptyset$ & $\emptyset$ & $1$
\tabularnewline \hline
$x^3-xy$ & $\emptyset$ & $\emptyset$ & $\emptyset$ & $0$
\tabularnewline \hline
\end{tabular}
\newline
\caption{For polynomials in Proposition \ref{auto_x^3}}\label{table_sets}
\end{table}

\begin{proposition}\label{auto_x^3}
Assume $f,g$ are polynomials listed in Table \ref{affine} with canonical form $x^3\,$. If $f \approx g\,$, then $f = g\,$.
\end{proposition}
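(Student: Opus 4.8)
The plan is to analyze $\varphi=\langle\,p\,,\,q\,\rangle$ by degree while exploiting the invariants of Lemma \ref{lemma_sets}, grouping the nine shapes by which term of $f$ besides $x^3$ is present, since that dictates whether $p^3$, $q^2$, $pq$, or $q$ controls $\deg\varphi(f)$. I would open with the coarse separation from $\text{red}$: as $\varphi(f)=cg$ gives $\text{red}(f)=c\,\text{red}(g)$, the cardinality of $\text{red}$ is a $\approx$-invariant, so the three cases $\text{red}=\mathbb{R}$ (the forms $x^3\pm x+J$, $x^3+1$, $x^3$), $\text{red}$ a single point (the forms $x^3-xy+1$, $x^3-xy$), and $\text{red}=\emptyset$ (everything built on $x^3-y^2$, together with $x^3-y$) cannot mix. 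In the one-point class this already closes the argument: $\text{red}(f)=c\,\text{red}(g)$ with both sets in $\{\{0\},\{1\}\}$ rules out pairing $\{0\}$ with $\{1\}$ and forces $c=1$, hence $f=g$.

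Next I would clear the structural cases. If $f$ (or, via $\varphi^{-1}$, if $g$) is one of $x^3\pm x+J$, $x^3+1$, $x^3$, then $\varphi(f)$ is a polynomial in $p$ alone, so $\deg\varphi(f)=3\deg p$; since $\deg g=3$ this forces $\deg p=1$, so $\varphi$ acts on $f$ as an affine map and Theorem \ref{thm1} gives $f=g$. For the remaining $\text{red}=\emptyset$ shapes I would run the degree bookkeeping on $\varphi(f)=p^3-q^2+(\text{linear in }p,q)$. With $d=\deg p$ and $e=\deg q$, the only competing top terms are $p^3$ and $-q^2$; the case $3d<2e$ is impossible because then $\deg\varphi(f)=2e$ is even, while $3d>2e$ forces $d=e=1$ and hence $\varphi$ affine, so that Theorem \ref{thm1} again applies. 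The sole non-affine survivor is the balanced case $3d=2e$, i.e. $d=2k$ and $e=3k$ with $k\ge 1$.

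This balanced case is precisely the scenario advertised before the proposition, and it is the main obstacle. Cancellation of the degree-$6k$ parts forces the leading forms to satisfy $\bar p^{\,3}=\bar q^{\,2}$, whence $\bar p=a\,h^{2}$ and $\bar q=b\,h^{3}$ for a common form $h$ of degree $k$. The plan is to show that no automorphism can exhibit this pattern: the two leading forms are algebraically dependent, yet neither is a scalar power of the other, so no elementary (triangular) transformation lowers the degree of the pair, the obstruction being that $\deg q/\deg p=\tfrac32\notin\mathbb{Z}$; this cannot occur for a member of $\text{Aut}\,\mathbb{R}[x,y]$. Making this impossibility fully rigorous is the step I expect to be hardest. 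Its payoff is decisive, because it is exactly what the scaled invariants cannot see: for instance $\text{cusp}(x^3-y^2+1)=\{1\}$ and $\text{cusp}(x^3-y^2-1)=\{-1\}$ are perfectly consistent with $c=-1$, and only the nonexistence of a genuine non-affine map excludes $x^3-y^2+1\approx x^3-y^2-1$ (and, similarly, the sign collisions within the families $x^3-y^2\pm x+J$). Once every $\text{red}=\emptyset$ map is known to be affine, Theorem \ref{thm1} separates all of these for good.

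Finally, for $f=x^3-y$ the condition $\text{red}=\emptyset$ places $g$ among the $x^3-y^2$ shapes or equal to $x^3-y$; by the previous paragraph $x^3-y$ cannot be $\approx$ to any $x^3-y^2$ form (such an equivalence would be affine, contradicting Theorem \ref{thm1}), leaving $g=x^3-y=f$. As an independent check one can distinguish $x^3-y$ from the elliptic shapes $x^3-y^2+x+J$, all of whose real invariants in Table \ref{table_sets} vanish, by taking $K=\mathbb{C}$ in Lemma \ref{lemma_sets}: there $\text{sing}(x^3-y^2+x+J)$ is a nonempty conjugate pair while $\text{sing}(x^3-y)=\emptyset$.
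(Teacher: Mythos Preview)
Your overall plan is sound, and in fact the paper offers essentially the same route as an \emph{alternative} at the very end: once the $x$-only polynomials and the $x^3-xy$ pair are disposed of by your $\text{red}$ argument, the $x^3-y^2$ family is handled by observing that $\varphi=\langle p,q\rangle$ with $\deg p=2k$, $\deg q=3k$ contradicts Abhyankar--Moh (equivalently Jung--van der Kulk: for any automorphism one of $\deg p,\deg q$ divides the other). The step you flag as ``hardest'' is a genuine gap: your leading-form remark $\bar p^{\,3}=\bar q^{\,2}\Rightarrow \bar p=ah^2,\ \bar q=bh^3$ is correct, but the claim that this cannot occur for an automorphism is not elementary and is precisely the content of those structure theorems. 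You should cite one of them rather than attempt it from scratch; the paper does exactly that in its concluding proposition.

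The paper's \emph{main} proof, however, takes a different path that avoids any structural theorem for $\text{Aut}\,\mathbb{R}[x,y]$. After removing the $x$-only forms and using Table~\ref{table_sets} with Lemma~\ref{lemma_sets} to isolate $x^3-xy$, $x^3-xy+1$, and $x^3-y^2$, it partitions the remaining polynomials into three small sets by their $(\text{cusp},\text{isol},\text{node})$ profiles. Within $\{x^3-y^2\pm 1\}$ it argues directly: a would-be equivalence forces $c=-1$ and hence $\varphi(x^3-y^2)=-x^3+y^2$; since $\varphi$ fixes the unique singular point (the origin), comparing degree-two homogeneous parts yields $-q_1^{\,2}=y^2$, impossible over $\mathbb{R}$. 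Within $\{x^3-y^2-x+J\}$ the pair $\text{isol}(f),\text{node}(f)$ gives a two-equation system forcing $c=1$. Within $\{x^3-y,\ x^3-y^2+x+J\}$ the paper passes to $\mathbb{C}$: $\text{sing}$ is empty for $x^3-y$ but a conjugate pair for $x^3-y^2+x+J$, which separates $x^3-y$; matching the two complex singular values between two $J$'s again gives a system, one branch yielding $c=1$, the other $c=-1$, and the $c=-1$ branch is killed by conjugating to $x^3-i\sqrt{3}\,x^2-y^2$ and reading off a contradictory linear system in the degree-one parts of the automorphism.

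In short: your proposal is the paper's alternative proof, contingent on citing Abhyankar--Moh at the balanced step; the paper's primary proof trades that citation for explicit singular-locus bookkeeping over $\mathbb{R}$ and $\mathbb{C}$, which is longer but self-contained.
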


\begin{proof}
Let $\varphi \in \text{Aut}\,\mathbb{R}[x,y]$ be such that $\varphi(f)=cg$ for some $c \in \mathbb{R}^{\times}\,$, and express $\varphi$ in the form $\big\langle \, p \, , \, q \, \big\rangle\,$. If $f$ is a polynomial only in $x\,$, then $\varphi(f)$ is determined completely by $p\,$. Supposing $\deg(p)$ is greater than one leads to contradiction since $\varphi(f)$ would then be of degree at least six. Hence, $\deg(p)$ must be one, and the result from Theorem \ref{thm1} applies. It remains to consider the polynomials that are the subject of Table \ref{table_sets}. By Lemma \ref{lemma_sets}, our desired result follows should $f$ be either $x^3-xy\,$, $x^3-xy+1\,$, or $x^3-y^2\,$. Lemma \ref{lemma_sets} also implies that $f,g$ must both be contained in one of the following sets
\begin{align}
& \{ \, x^3-y^2 \pm 1 \, \} \, , \\
& \{ \, x^3-y^2-x+J \, \} \, , \\
& \{ \, x^3-y \, , \, x^3-y^2+x+J \, \} \, .
\end{align}

Assume $f,g$ are from set (10), and suppose $f$ is not equal to $g\,$. By Lemma \ref{lemma_sets} and the automorphism property of $\varphi\,$, we have that $c$ equals $-1$ and subsequently that $\varphi(x^3-y^2)$ equals $-x^3+y^2\,$. Since $\varphi$ must map the singular points of $f$ to singular points of $g\,$, it follows that $\varphi$ must fix the origin. Hence, the constant coefficient of $p,q$ must be zero. Expressing $q$ in terms of homogeneous components $q_i$ each of degree $i\,$, it follows that ${-q_1}^2$ equals $y^2\,$. This leads to contradiction since ${-1}$ does not have a square root in $\mathbb{R}\,$.

Assume $f,g$ are from set (11), and refer to $f$ by $x^3-y^2-x+J_1$ and to $g$ by $x^3-y^2-x+J_2\,$. By Lemma \ref{lemma_sets}, we have the system
\begin{align}
J_1+\frac{2\sqrt{3}}{9} \, &= \, c\left(J_2 + \frac{2\sqrt{3}}{9}\right) \, , \\
J_1-\frac{2\sqrt{3}}{9} \, &= \, c\left(J_2 - \frac{2\sqrt{3}}{9}\right) \, .
\end{align}
Subtracting (13) from (14) to eliminate $J_1,J_2$ yields that $c$ equals $1$ and our desired result follows.

Assume $f,g$ are from set (12). Viewing these polynomials in $\mathbb{C}[x,y]\,$, it is straightforward to show that $\text{sing}(x^3-y) \, = \, \emptyset$ and that
\[ \text{sing}(x^3-y^2+x+J) \, = \, \left\lbrace \, J-\frac{2i\sqrt{3}}{9} \, , \, J+\frac{2i\sqrt{3}}{9} \, \right\rbrace \, . \] It follows from Lemma \ref{lemma_sets} that our desired result follows for $x^3-y\,$. Henceforth, refer to $f$ by $x^3-y^2+x+J_1$ and to $g$ by $x^3-y^2+x+J_2\,$. Note that $\varphi$ can be uniquely be extended to $\overline{\varphi} \in \text{Aut} \, \mathbb{C}[x,y]$ by defining $\overline{\varphi}(i)=i\,$, and it follows that $f \approx_{\mathbb{C}} g\,$. Hence, Lemma \ref{lemma_sets} applies and two possible systems of equations result. One such system is similar to (13) and (14), resulting in the solution $c=1$ and our desired result follows. The other such system is
\begin{align}
J_1+\frac{2i\sqrt{3}}{9} \, &= \, c \left(J_2-\frac{2i\sqrt{3}}{9} \right) \, , \\
J_1-\frac{2i\sqrt{3}}{9} \, &= \, c \left(J_2+\frac{2i\sqrt{3}}{9} \right) \, . 
\end{align}
Suppose this system holds. Subtracting (15) from (16) to eliminate $J_1,J_2\,$ yields that $c$ equals $-1$ (ie. $\overline{\varphi}(f)=-g\,$) and $J_2$ equals $-J_1\,$. Let $\theta$ be in $\text{Aut} \, \mathbb{C}[x,y]$ such that it fixes $\mathbb{C}$ and is of the form $\big\langle x+ \frac{i\sqrt{3}}{3} \, , \, y \big\rangle\,$. It follows that \[ \theta^{-1}\bigg( \, f-J_1+\frac{2i\sqrt{3}}{9} \, \bigg) \, = \, x^3-i\sqrt{3}\,x^2-y^2\, .\]
Combining this with $\overline{\varphi}(f)=-g$ yields
\begin{align}
\left( \, \theta \circ \overline{\varphi} \circ \theta \, \right)\left( \, x^3-i\sqrt{3}\,x^2-y^2 \, \right) \, = \, -x^3-i\sqrt{3}\,x^2+y^2\, .
\end{align}
Express $\theta \circ \overline{\varphi} \circ \theta$ in the form $\big\langle \, P \, , \, Q \, \big\rangle\,$, and let $P_i,Q_i$ be the homogeneous component of $P,Q$ of degree $i\,$. Observe that all of the coefficients of $Q$ must be real. Also observe that $\theta \circ \overline{\varphi} \circ \theta$ must fix the origin as it sends the singular point of $x^3-(i\sqrt{3})x^2-y^2$ to the singular point of $-x^3+(i\sqrt{3})x^2-y^2$. Hence, $P_0,Q_0$ must both be zero. As such, isolating the degree-two terms of (17) results in
\begin{align}
-\left( \, i\sqrt{3}\,{P_1}^2+{Q_1}^2 \, \right) \, = \, -\left( \, i\sqrt{3}\, x^2-y^2\, \right) \, .
\end{align}
Express $P_1$ as $(a_1+a_2i)x+(b_1+b_2i)y$ and $Q_1$ as $cx+dy\,$, with $a_1,a_2,b_1,b_2,c,d \in \mathbb{R}\,$. Inspecting the $x^2$-, $xy$-, and $y^2$-coefficients of (18) yields the system
\begin{align}
i\sqrt{3}\,(a_1+a_2i)^2+c^2 \, &= \, i\sqrt{3} \, , \\
i\sqrt{3}\,(a_1+a_2i)(b_1+b_2i)+cd \, &= \, 0 \, , \\
i\sqrt{3}\,(b_1+b_2i)^2+d^2 \, &= \, {-1} \, .
\end{align}
From (21), it follows that ${b_1}^2$ must equal ${b_2}^2$ (and both cannot be zero). From (20), it follows that $a_1 b_2$ must equal $a_2 b_1$ (and hence ${a_1}^2$ and ${a_2}^2$ are equal). This leads to contradiction, as (19) demands that ${a_1}^2$ and ${a_2}^2$ must differ by $1\,$. As a result, the system of (15) and (16) is logically impossible, and we have achieved our desired result.
\end{proof}

We now have established an automorphic classification of cubic curves, which incidentally coincides with our affine classification. That said, it is worth noting that some polynomials $f$ in our affine classification were such that $\text{AutDeg}(f)$ was less than three. Such polynomials are listed in Table \ref{autdeg} along with a representative of minimal degree.

\begin{theorem}\label{thm2}
Every degree-three polynomial in $\mathbb{R}[x,y]$ is equivalent with respect to $\approx$ to some polynomial in Table \ref{affine}. Moreover, the polynomials in Table \ref{affine} are pairwise inequivalent with respect to $\approx\,$.
\end{theorem}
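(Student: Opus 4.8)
The plan is to treat the theorem's two assertions separately, deriving each from material already in place. The existence half is immediate: because the affine group $\Gamma_2(\mathbb{R})$ is a subgroup of $\text{Aut}\,\mathbb{R}[x,y]$, any $\theta$ realizing $f \sim g$ also realizes $f \approx g$, so $\sim$ refines $\approx$ exactly as remarked when $\approx$ was introduced. Theorem \ref{thm1} guarantees that every degree-three polynomial is $\sim$-equivalent to an entry of Table \ref{affine}, and this equivalence is then \emph{a fortiori} an $\approx$-equivalence. No new computation is needed for the first sentence.

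For the pairwise-inequivalence half I would fix $f, g$ in Table \ref{affine} with $f \approx g$ and establish $f = g$ by a two-stage case analysis. First I would reduce to the situation where $f$ and $g$ lie in the same one of the four canonical-form blocks $x^3+xy^2$, $x^3-xy^2$, $x^2y$, $x^3$; this is precisely the cross-block inequivalence recorded in the paragraph preceding the propositions, namely that these forms carry distinct factorization structures over $\mathbb{R}$ and hence lie in different $\approx$-classes. Second, with $f$ and $g$ confined to a common block, I would invoke the matching result --- Proposition \ref{auto_x^3+xy^2}, \ref{auto_x^3-xy^2}, \ref{auto_x^2y}, or \ref{auto_x^3} --- each of which was proved under exactly this hypothesis and concludes $f = g$. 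Running this dichotomy over every ordered pair of table entries delivers the claim, so at this level the theorem is essentially an assembly of its propositions.

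The real difficulty sits underneath the first stage and inside the propositions, not in the assembly. A non-affine $\varphi \in \text{Aut}\,\mathbb{R}[x,y]$ need not preserve degree, so the canonical block cannot be read off by comparing homogeneous cubic parts --- the discussion before Proposition \ref{auto_x^3} shows explicitly how $\varphi(x^3)$ and $\varphi(-y^2)$ might cancel higher-degree terms. Block-invariance must therefore rest on quantities that survive arbitrary polynomial automorphisms: the reducibility and singularity data of Definition \ref{def_sets} transforming by Lemma \ref{lemma_sets}, the degree bookkeeping through $\text{AutDeg}$ and Lemma \ref{lemma_x^2+-y^2}, and the homeomorphism type of the real zero sets. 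These are what separate the reducible line-times-conic $x^3+xy^2$, the three concurrent real lines of $x^3-xy^2$, the double-plus-simple line of $x^2y$, and the triple line of $x^3$. Since that machinery has already been deployed in the four propositions, the residual task I would carry out for Theorem \ref{thm2} is the organizational one of verifying that the two stages together leave no pair of Table \ref{affine} entries uncovered --- which I would check block by block.
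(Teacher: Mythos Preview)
Your proposal is correct and follows essentially the same route as the paper: the paper's own proof is the single line ``The result follows from the propositions in this section,'' and your write-up is a faithful unpacking of that sentence---existence via Theorem~\ref{thm1} together with $\sim\Rightarrow\approx$, cross-block separation via the factorization remark preceding the propositions, and within-block equality via Propositions~\ref{auto_x^3+xy^2}--\ref{auto_x^3}. Your added commentary on why block separation is delicate under non-affine automorphisms is accurate but goes beyond what the paper spells out; the assembly itself matches.
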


\begin{proof}
The result follows from the propositions in this section.
\end{proof}

\begin{table}
\def\arraystretch{1.5}
\begin{tabular}{| >{\centering}m{1in} | >{\centering}m{1in} | >{\centering}m{1in} |}
\hline
$f$ & $\varphi$ & $\varphi(f)$
\tabularnewline \hline\hline
$x^3-y$ & $\big\langle \, y \, , \, y^3-x \, \big\rangle$ & $x$
\tabularnewline \hline
$x^3-xy$ & $\big\langle \, y \, , \, y^2-x \, \big\rangle$ & $xy$
\tabularnewline \hline
$x^3-xy+1$ & $\big\langle \, y \, , \, y^2-x \, \big\rangle$ & $xy+1$
\tabularnewline \hline
\end{tabular}
\newline
\caption{Cubic polynomials with $\text{AutDeg}(f) < 3$}\label{autdeg}
\end{table}

As a concluding result, we will note that the polynomials in Proposition \ref{auto_x^3} of the form $x^3-y^2+\lambda(f)$ with $\lambda(f)$ being linear could be alternatively handled (more concisely) using the Epimorphism Theorem of Abhyankar and Moh \cite{Ab-Moh}. The subsequent proposition demonstrates this approach.

\begin{proposition}
Let $f,g$ be polynomials from Table \ref{affine} of the form $x^3-y^2+\lambda(f)$ and $x^3-y^2+\lambda(g)$ where $\lambda(f),\lambda(g)$ are linear. If $f \approx g\,$, then $f =g\,$.
\end{proposition}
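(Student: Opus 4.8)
The plan is to invoke the Epimorphism Theorem \cite{Ab-Moh} only through its standard corollary on the degrees of the two components of a plane automorphism: if $\varphi = \langle \, p \, , \, q \, \rangle \in \text{Aut}\,\mathbb{R}[x,y]$ and $\deg(p), \deg(q)$ are not both equal to one, then $\deg(p)$ divides $\deg(q)$ or $\deg(q)$ divides $\deg(p)$. Granting this, the entire family $x^3-y^2+\lambda(f)$ can be dispatched uniformly by proving that any $\varphi$ realizing $f \approx g$ is forced to be affine, after which the affine classification finishes the job and, crucially, the awkward $c=-1$ possibility from set (12) in Proposition \ref{auto_x^3} never arises.

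First I would fix notation: let $\varphi = \langle \, p \, , \, q \, \rangle$ satisfy $\varphi(f)=cg$, and write $m=\deg(p)$, $n=\deg(q)$, noting $m,n \geq 1$ since an automorphism can have no constant component. Because $f = x^3-y^2+\lambda(f)$ with $\lambda(f)$ of degree at most one, we have $\varphi(f) = p^3 - q^2 + \lambda(f)(p,q)$, where the contribution of $\lambda(f)$ has degree at most $\max(m,n)$. Since $\max(3m,2n) \geq 2\max(m,n) > \max(m,n)$ for $m,n \geq 1$, the linear part can never affect the top degree, so $\deg(\varphi(f))$ is governed entirely by $p^3$ and $q^2$.

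Next I would run the degree bookkeeping. If $3m \neq 2n$, the leading forms of $p^3$ and $q^2$ live in distinct degrees and cannot cancel, so $\deg(\varphi(f)) = \max(3m,2n)$; forcing this to equal $\deg(cg)=3$ leaves only $m=n=1$. If instead $3m=2n$, then $2 \mid m$ and $3 \mid n$, whence $m=2s$ and $n=3s$ for some positive integer $s$. In particular $m,n>1$ while neither divides the other, which contradicts the degree-divisibility corollary above. Hence this case is vacuous, independent of whether the top-degree terms actually cancel.

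Therefore $m=n=1$, so $\varphi \in \Gamma_2(\mathbb{R})$ and $\varphi(f)=cg$ exhibits $f \sim g$. Since $f$ and $g$ have canonical form $x^3$, Proposition \ref{prop2_x^3} (and hence Theorem \ref{thm1}) gives $f=g$. I expect the only delicate point to be the clean justification that a top-degree collapse forces precisely $3m=2n$, yielding the non-divisible degree pair $(2s,3s)$; once that is in place, the Epimorphism Theorem replaces the fiber-by-fiber computation over $\mathbb{C}$ entirely, eliminating the $c=-1$ branch automatically rather than by an explicit contradiction on homogeneous components.
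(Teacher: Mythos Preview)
Your proof is correct and takes essentially the same approach as the paper's: both show that a non-affine $\varphi$ would force component degrees $(\deg p,\deg q)=(2k,3k)$ and then obtain a contradiction from Abhyankar--Moh. The only difference is packaging---the paper applies the epimorphism theorem in its original line-embedding form (which requires a preliminary shear $\langle x+ay,\,y\rangle$ to make the $y$-degrees of $p,q$ match their total degrees, plus an extension to $\mathbb{C}$), whereas you invoke the equivalent degree-divisibility corollary for plane automorphisms directly over $\mathbb{R}$, so the substance is the same.
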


\begin{proof}
Suppose $\varphi$ is in $\text{Aut} \, \mathbb{R}[x,y] \setminus \Gamma_2(\mathbb{R})$ such that $\varphi(f)=cg$ for some $c \in \mathbb{R}^{\times}\,$. Express $\varphi$ in the form $\big\langle \, p \, , \, q \, \big\rangle$ and observe that $\deg(p),\deg(q)$ must equal $2k,3k$ (repectively) for some positive integer $k\,$. Apply $\theta_a$ of the form $\big\langle \, x+ay \, , \, y \, \big\rangle$ for some $a \in \mathbb{R}$ so that $\theta_a(p)$ will have a nonzero $y^{2k}$-coefficient and $\theta_a(q)$ will have a nonzero $y^{3k}$-coefficient. Such an $a$ is possible since the $y^{2k}$- and $y^{3k}$-coefficients of $p$ and $q$ (respectively) will be polynomials in $\mathbb{R}[a]$ and hence have only finitely many roots in $\mathbb{R}\,$. View $\overline{\theta \circ \varphi}$ as the natural extension of $\theta \circ \varphi$ in $\text{Aut} \, \mathbb{C}[x,y]$ that fixes $\mathbb{C}\,$, and express $\overline{\theta \circ \varphi}$ as $\big\langle \, P \, , \, Q \, \big\rangle\,$. It follows that the degree of $P(0,y),Q(0,y)$ equals $2k,3k$ (respectively). This leads to contradiction of Abhyankar-Moh \cite{Ab-Moh} as neither $2k$ nor $3k$ will divide the other.  
\end{proof}

This work is adapted, in part, from the author's Ph.D. thesis \cite{Bly}.

\end{document}